\def\E{{\mathbb E}}
\def\ER{{Erd\H{o}s--R\'{e}nyi }}
\renewcommand{\phi}{\varphi}
\newcommand{\comment}[1]{}
\def\Z{{\mathbb Z}}
\def\P{{\mathbb P}}
\def\height{{\operatorname{height}}}
\def\width{{\operatorname{width}}}
\def\corank{{\operatorname{corank}}}
\def\rank{{\operatorname{rank}}}
\def\deg{{\operatorname{deg}}}
\newtheorem*{Berry-Esseen}{Berry-Esseen Theorem}
\newtheorem{thm}{Theorem}
\newtheorem{prop}[thm]{Proposition}
\newtheorem{cor}[thm]{Corollary}
\newtheorem{lemma}[thm]{Lemma}
\newtheorem*{defn}{Definition}
\newtheorem{claim}{Claim}
\theoremstyle{remark}
\title{Sandpile Groups of Random Bipartite Graphs}
\author{Shaked Koplewitz}
\address{Mathematics Department, Yale University, New Haven, CT 06511}
\email{shaked.koplewitz@gmail.com}
\begin{document}

\begin{abstract}

We determine the asymptotic distribution of the $p$-rank of the sandpile groups of random bipartite graphs. We see that this depends on the ratio between the number of vertices on each side, with a threshold when the ratio between the sides is equal to $\frac{1}{p}$.
We follow the approach of Wood in \cite{mw} and consider random graphs as a special case of random matrices, and rely on a variant the definition of min-entropy given by Maples in \cite{km} in order to obtain useful results about these random matrices. Our results show that unlike the sandpile groups of \ER random graphs, the distribution of the sandpile groups of random bipartite graphs depends on the properties of the graph, rather than coming from some more general random group model.

\end{abstract}

\maketitle

\section{Introduction}
\label{sec:intro}

\subsection{The Main Theorem}

In this paper, we study the sandpile group of a random bipartite graph. Recall that the sandpile group $\Gamma(G)$ of a connected graph $G$ is the cokernel of the reduced laplacian matrix $\Delta'$.

Let $0<\alpha,q<1$ be constants. We define a random bipartite graph $G=G(n,\alpha,q)$ as follows: Take two sets of vertices $L$ and $R$ with $|L|=n,|R|=\lfloor\alpha n\rfloor$, and for each pair of vertices $v\in L$ and $u\in R$, include the edge between $v$ and $u$ independently with probability $q$.

We now state our main result about the $p$-rank of $\Gamma(G)$:

\begin{thm}
\label{thm:main}

Let $G=G(n,\alpha,q)$ be a random bipartite graph, and $p$ a prime. Then as $n\rightarrow \infty$, the expected value of the $p$-rank of the sandpile group $\Gamma(G)$ is:

\begin{enumerate}
\item $\left(\frac{1}{p}-\alpha\right)n+O(1)$ if $\alpha<\frac{1}{p}$
\item $O(1)$ if $\alpha>\frac{1}{p}$
\item $\sqrt{\frac{\frac{1}{p}(1-\frac{1}{p})n}{2\pi}}$+$O(1)$ if $\alpha=\frac{1}{p}.$
\end{enumerate}

\end{thm}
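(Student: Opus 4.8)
The plan is to compute the expected $p$-rank through the corank of the Laplacian over $\F_p$, reduce that corank to a question about a single random $0/1$ matrix over $\F_p$, and then evaluate an elementary binomial expectation. Write the full Laplacian in block form $L=\left(\begin{smallmatrix} D_L & -B\\ -B^{T} & D_R\end{smallmatrix}\right)$, where $B$ is the $n\times\lfloor\alpha n\rfloor$ biadjacency matrix and $D_L,D_R$ are the diagonal degree matrices. Since the $p$-rank of $\Gamma(G)=\coker(\Delta')$ equals $\corank_{\F_p}(\Delta')$, I would first record the exact identity $\corank_{\F_p}(\Delta')=\corank_{\F_p}(L)-1$: because every row sum of $L$ vanishes the deleted row is a combination of the others, and because every column sum vanishes the remaining part of the deleted column lies in the column space of $\Delta'$, so $\rank_{\F_p}(L)=\rank_{\F_p}(\Delta')$. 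Thus it suffices to study $\corank_{\F_p}(L)$.

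The key structural step is to split the left vertices by degree: let $L_0=\{v:\deg(v)\equiv 0 \pmod p\}$, let $L_1$ be its complement, and note that $D_{L_1}$ is invertible over $\F_p$. Eliminating the variables $x_v$ for $v\in L_1$ via the rows they index turns the kernel equations into (a) $B_{L_0,R}\,y=0$ and (b) $Cy=B_{R,L_0}\,w$, where $w=x|_{L_0}$, the matrix $B_{L_0,R}$ is the submatrix of $B$ on rows $L_0$, its transpose is $B_{R,L_0}$, and $C=D_R-B_{R,L_1}D_{L_1}^{-1}B_{L_1,R}$. Setting $y=0$ shows at once that every $w$ in the right kernel of $B_{R,L_0}$ yields an element of $\ker_{\F_p}(L)$, giving the lower bound $\corank_{\F_p}(L)\ge |L_0|-\rank_{\F_p}(B_{R,L_0})\ge\max(|L_0|-\lfloor\alpha n\rfloor,\,0)$. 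For the matching upper bound I would project $\ker L$ onto the $y$-coordinate: the kernel of this projection is exactly the space just described, while its image lies in the right kernel of $B_{L_0,R}$, cut down further by the solvability constraint $Cy\in\mathrm{colspace}(B_{R,L_0})$ coming from (b). The crux is to show that this image contributes only $O(1)$ dimensions in expectation; in the regime $\alpha>1/p$, where the right kernel of $B_{L_0,R}$ is itself large, this forces one to exploit that $C$ is, conditionally, generic relative to $B_{R,L_0}$.

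To make the two bounds meet I would condition on all the degrees. Given the degrees the partition $L_0,L_1$ is fixed, the columns of $B_{R,L_0}$ become independent uniform $0/1$ vectors on the weight-$\equiv 0 \pmod p$ slices, and the data defining $C$ (the columns of $B$ indexed by $L_1$) is independent of $B_{R,L_0}$. Each such column still has large min-entropy, so the variant of Maples' condition recalled earlier applies and yields that a random matrix of this shape has corank $\max(|L_0|-\lfloor\alpha n\rfloor,0)+\xi$ with $\E[\xi]=O(1)$, together with exponential tail bounds on $\xi$; the tails are exactly what I need to keep rare high-corank configurations from polluting the expectation. Feeding in the analogous genericity input for $C$ closes the gap and gives $\E[\corank_{\F_p}(L)]=\E[\max(|L_0|-\lfloor\alpha n\rfloor,0)]+O(1)$. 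I expect this passage, from a typical-corank statement to an expectation with the dependence between the degree-based selection of $L_0$ and the entries of $B$ fully controlled, to be the main obstacle.

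Finally I would evaluate the binomial expectation. The events $\{\deg(v)\equiv 0 \pmod p\}$ depend on disjoint edge sets, so $|L_0|$ is exactly Binomial$(n,\pi)$ with $\pi=\tfrac1p+O(\rho^{\lfloor\alpha n\rfloor})$ for some $\rho<1$, by a roots-of-unity filter applied to the degree distribution. When $\alpha<1/p$ the mean gap $(\tfrac1p-\alpha)n$ dwarfs the $O(\sqrt n)$ fluctuations, so $\E[\max(|L_0|-\lfloor\alpha n\rfloor,0)]=(\tfrac1p-\alpha)n+O(1)$; when $\alpha>1/p$ the same concentration makes the maximum vanish outside an exponentially unlikely event, giving $O(1)$; and when $\alpha=1/p$ the gap is centered with standard deviation $\sigma=\sqrt{\tfrac1p(1-\tfrac1p)n}+o(1)$, so by the Berry--Esseen estimate $\E[\max(|L_0|-\lfloor\alpha n\rfloor,0)]=\sigma\,\E[Z^{+}]+O(1)=\sigma/\sqrt{2\pi}+O(1)=\sqrt{\tfrac1p(1-\tfrac1p)n/(2\pi)}+O(1)$. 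Subtracting the $1$ from the first step and absorbing it into $O(1)$ produces the three cases of the theorem.
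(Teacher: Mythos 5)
Your plan follows essentially the same route as the paper: split the left vertices by degree mod $p$ into $L_0$ (the degree-$\equiv 0$ set, which is the binomial variable $B(n,\tfrac1p)$ up to an exponentially small perturbation), eliminate the invertible diagonal block $D_{L_1}$ (your kernel equations (a), (b) are exactly the Schur complement $M/D_1'$ in Section 5), use a min-entropy corank bound to show $B_{R,L_0}$ is essentially full rank and that the residual constraint contributes $O(1)$, and finish with the binomial expectation. Your identity $\corank_{\F_p}(L)=\bigl(|L_0|-\rank B_{R,L_0}\bigr)+\dim\pi_y(\ker L)$ is a clean unified packaging of the paper's two-case analysis. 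However, there are two genuine gaps. First, your independence claim is false as stated: $C=D_R-B_{R,L_1}D_{L_1}^{-1}B_{L_1,R}$ contains $D_R$, whose diagonal entries are the full column sums of $B$ and therefore depend on the $L_0$-rows, i.e.\ on $B_{R,L_0}$. The paper does not assume this independence --- it exploits the dependence in the opposite direction, noting that the $B_2$-contribution to each entry of $D_2$ supplies roughly $n/p$ fresh Bernoulli summands, which makes the diagonal of $C$ nearly uniform \emph{conditionally} on $B_1$ and the previously examined entries; that conditional near-uniformity is what drives the construction of the large nonsingular principal submatrix $C_3$. Your argument needs this repair before the ``genericity of $C$'' input exists at all.

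Second, the case $\alpha>\tfrac1p$ is where all the work is, and your proposal only gestures at it. There $\ker B_{L_0,R}$ has dimension $\approx(\alpha-\tfrac1p)n$, linear in $n$, and you must show that the solvability constraint $Cy\in\mathrm{colsp}(B_{R,L_0})$ cuts this down to $O(1)$ with exponential tails. Since $C$ is symmetric with entries quadratic in the $B_{L_1,R}$ entries, its min-entropy relative to its own other entries (and relative to $B_{R,L_0}$) is not automatic; this is precisely the content of the paper's Lemma 9(3) (which requires deleting $p$ rows and columns from each side of the Laplacian so that conditioning on the visible entries does not determine anything) and of the three-stage row-independence argument for the blocks $u_i,v_i,w_i$ in the proof of Proposition 12. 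Without an analogue of these steps the upper bound does not close. A smaller point: in the critical case $\alpha=\tfrac1p$, the uniform Berry--Esseen bound only yields $\E[\max(|L_0|-\alpha n,0)]=\sigma/\sqrt{2\pi}+O(\sqrt{\log n})$ after integrating the tail over a window of width $\sqrt{n\log n}$; to get the stated $O(1)$ you need the non-uniform Berry--Esseen inequality or a local limit theorem --- the paper instead uses an exact telescoping identity (Claim 1) together with the De Moivre--Laplace theorem.
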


\noindent It is worth noting that the limits in the theorem do not depend on the value of $q$.

Theorem~\ref{thm:strongMain} will also give us explicit information about the distribution of the $p$-ranks. From numerical computations, it appears that the $O(1)$ constants in the first two cases of the theorem are at most $1$, and the $O(1)$ constant in the third case is around $2$.

The proof of Theorem~\ref{thm:main} relies on the assumption that $\alpha<1$. Based on numerical computations of random graphs, we conjecture that Theorem~\ref{thm:main} also holds when $\alpha=1$. This implies that the expected $p$-rank of the sandpile group of a balanced bipartite graphs should be $O(1)$ for all primes $p$. However, the best that can be done with our methods is:
\begin{cor}
\label{cor:balanced}
Let $G=G(n,1,q)$ be a random balanced bipartite graph, $p$ prime. Then as $n\rightarrow \infty$, the expected value of the $p$-rank of the sandpile group $\Gamma(G)$ is $o(n)$.
\end{cor}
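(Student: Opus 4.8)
\noindent The plan is to pass from the sandpile group to the full (unreduced) Laplacian and estimate its $\F_p$-corank directly. First I would record the elementary reduction
\[
\rank_p \Gamma(G) = \corank_{\F_p} \Delta(G) - 1,
\]
which holds because $\coker \Delta(G) \isom \Z \oplus \Gamma(G)$ for a connected graph (the free $\Z$ summand reflects the constant vector in $\ker \Delta$), so that tensoring with $\F_p$ gives the displayed relation. Thus Corollary~\ref{cor:balanced} is equivalent to the statement $\E[\corank_{\F_p} \Delta(G)] = o(n)$ for $G = G(n,1,q)$, and it suffices to work with the $2n \times 2n$ matrix $\Delta(G)$ over $\F_p$.

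The main point is then a high-probability sublinear bound on the corank, which I would combine with the trivial deterministic bound $\corank_{\F_p}\Delta(G) \le 2n$. Concretely, I would aim to show that there is a function $h(n) = o(n)$ with $\corank_{\F_p}\Delta(G) \le h(n)$ with probability $1 - o(1)$. Granting this, the expectation splits as $\E[\corank_{\F_p}\Delta(G)] \le h(n) + 2n \cdot o(1) = o(n)$, which is exactly the corollary. To prove the high-probability bound I would reuse the reduction to random matrices and the (Maples-style) min-entropy hypothesis driving the proof of Theorem~\ref{thm:main}: the biadjacency matrix $B$ is an $n \times n$ matrix of i.i.d.\ $\mathrm{Bernoulli}(q)$ entries whose reduction mod $p$ has bounded min-entropy, and $\Delta(G)$ is assembled from $B$ together with the degree diagonals $D_L, D_R$. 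I would bound $\dim_{\F_p}\ker \Delta(G)$ by separating the vectors supported on the vertices of degree $\equiv 0 \pmod p$ (the source of the linear term when $\alpha<1/p$) from the remaining vectors: for $\alpha = 1$ the degree-zero set on each side has size $\approx n/p$, so the first type contributes the corank of a tall $n \times \lfloor n/p\rfloor$ random matrix, which the min-entropy estimate controls to $o(n)$, while the remaining vectors are governed by a square $n \times n$ system whose corank the same estimate bounds by $o(n)$ with probability $1-o(1)$.

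The hard part is exactly this last, square, regime. When $\alpha < 1$ the relevant matrices have aspect ratio bounded away from $1$, and the min-entropy method yields a \emph{bounded} corank, which is the source of the $O(1)$ in Theorem~\ref{thm:main}; at $\alpha = 1$ the balanced Laplacian forces a genuinely square random matrix over $\F_p$, and here the available machinery gives only that the rank is $2n - o(n)$ with high probability rather than $2n - O(1)$. This is the precise reason the method produces $o(n)$ rather than the conjectured $O(1)$: upgrading the square-matrix corank estimate from sublinear to bounded would require a stronger singularity bound for structured square $\F_p$-matrices than the min-entropy framework currently supplies, and it is this gap that Corollary~\ref{cor:balanced} and the surrounding conjecture isolate.
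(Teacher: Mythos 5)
Your reduction to $\E[\corank_{\F_p}\Delta(G)]=o(n)$ is fine, but the proposal then defers the entire difficulty to a step that the paper's machinery does not supply. You assert that the "square $n\times n$ system" arising in the balanced case has corank $o(n)$ with probability $1-o(1)$ by "the same [min-entropy] estimate." There is no such estimate available here. Theorem~\ref{thm:rectrank} and Corollary~\ref{cor:rectrank} require every entry to have min-entropy $\beta$ with respect to \emph{all} other entries, and the Laplacian violates this badly: it is symmetric (the block $A^{T}$ is determined by $A$) and its diagonal entries are determined by their rows. The paper's workaround (Lemma~\ref{lemma:properties}) only gives min-entropy for upper-triangular entries relative to other upper-triangular entries, and the resulting corank computation (Proposition~\ref{prop:mrank}) is a Schur-complement argument that explicitly needs $\alpha<1$ at the step $r>\left(\frac1p-\epsilon\right)n>\alpha\left(\frac1p+\epsilon\right)n>\alpha n-s$; at $\alpha=1$ this chain of inequalities collapses. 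So the "square regime" you isolate as the hard part is genuinely hard, and nothing in your sketch (or in the paper) produces even an $o(n)$ bound for it directly. Note also that if the min-entropy corank bound \emph{did} apply to that square system it would give $O(1)$, not $o(n)$, so the claim that "the available machinery gives only $2n-o(n)$" does not correspond to any actual result — it is the conclusion you want, not something you have derived.

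The paper's proof sidesteps all of this with a perturbation argument you are missing: fix $\epsilon>0$, delete $\frac{\epsilon}{2}n$ vertices from one side to obtain a graph with $\alpha=1-\frac{\epsilon}{2}<1$, apply Theorem~\ref{thm:main} (the case $\alpha>\frac1p$) to get expected $p$-rank $O(1)$ for the smaller graph, and then use that deleting a single vertex changes the $p$-rank of the sandpile group by at most $1$, so that
\[
\E(X)\leq \tfrac{\epsilon}{2}n+O(1)<\epsilon n
\]
for large $n$; since $\epsilon$ was arbitrary this gives $o(n)$. If you want to salvage your direct approach, you would need to either prove a singularity/corank bound for symmetric structured square matrices over $\F_p$ beyond what the min-entropy framework gives (which, as you correctly note at the end, is exactly the open issue behind the conjectured $O(1)$), or else import some monotonicity device like the vertex-deletion step above.
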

\noindent Which we prove in Section~\ref{sec:details}.

\subsection{Connection to \ER Random Graphs}

It is interesting to ask what the distribution of the sandpile groups of random graphs looks like. The authors of \cite{pk} noted that the sandpile group of a graph comes with a canonical symmetric perfect bilinear pairing $\langle\cdot,\cdot\rangle_{G}$, and conjectured that for an \ER random graph $G$, the pair $(\Gamma(G),\langle\cdot,\cdot\rangle)$ of the sandpile group and its associated pairing can be predicted by certain heuristics of Cohen-Lenstra type.

The Cohen-Lenstra heuristics are an attempt to model what a generic ``random'' group should look like. In \cite{pk}, the authors show that the cokernel of a random symmetric matrix over $\Z_{p}$, distributed according to the Haar measure, follows heuristics of Cohen-Lenstra type, and conjectured that the sandpile groups of \ER random graphs should follow the same heuristics. 

In \cite{mw}, Melanie Wood proves several results in this direction. In particular, she shows that for an \ER random graph $G$, the $p$-part of $\Gamma(G)$ follows these heuristics for any finite collection of primes $p$.

However, Theorem~\ref{thm:main} shows that sufficiently unbalanced random bipartite graphs do not follow any similar type of Cohen-Lenstra heuristics: For example, the Cohen-Lenstra heuristics predict that for any $p$, the expected $p$-rank of $\Gamma(G)$ should stay low as $n$ grows. However, Theorem~\ref{thm:main} implies that for sufficiently unbalanced bipartite graphs, the $p$-rank grows linearly with $n$. Furthermore, the Cohen-Lenstra heuristics predict that the probability that $\Gamma(G)$ is cyclic should converge to a constant between $0$ and $1$, but in Section~\ref{sec:details} we prove that this is not the case for sufficiently unbalanced bipartite graphs.

\begin{cor}
\label{cor:cyclic}
Let $G=G(n,\alpha,q)$ be a random bipartite graph with $\alpha<\frac{1}{2}$. Then as $n\rightarrow\infty$, the probability that $\Gamma(G)$ is cyclic goes to zero exponentially fast.

\end{cor}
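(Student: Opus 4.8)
The plan is to deduce Corollary~\ref{cor:cyclic} directly from the first case of Theorem~\ref{thm:main}. The key observation is that a finite abelian group $\Gamma$ is cyclic if and only if its $p$-rank is at most $1$ for every prime $p$; in particular, a necessary condition for $\Gamma(G)$ to be cyclic is that its $2$-rank be at most $1$. Since $\alpha<\frac{1}{2}$, we have $\alpha<\frac{1}{p}$ precisely for $p=2$, so case (1) of the theorem applies at the prime $p=2$ and tells us that the expected $2$-rank of $\Gamma(G)$ grows like $\left(\frac{1}{2}-\alpha\right)n+O(1)$, which tends to infinity linearly in $n$. Thus the strategy is to show that the $2$-rank being large forces $\Gamma(G)$ to be non-cyclic, and that this happens with probability tending to $1$ exponentially fast.

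First I would invoke Theorem~\ref{thm:strongMain}, the stronger distributional statement referenced in the text, rather than merely the expected-value form of Theorem~\ref{thm:main}. An expectation alone does not give exponential concentration: to conclude that $\P[\text{$2$-rank}\le 1]$ decays exponentially I need control on the whole distribution of the $2$-rank, or at least a good upper bound on the probability that it is small. The distributional result should express $\P[\text{$2$-rank}(\Gamma(G)) = r]$ (or $\le r$) in terms of the corank of a random matrix over $\F_2$, and the relevant quantity is the probability that this corank is $0$ or $1$ when the matrix is sufficiently far from square. I would extract from that setup the statement that $\P[\text{$2$-rank} \le 1] \le C\lambda^{n}$ for some constants $C>0$ and $0<\lambda<1$ depending on $\alpha$.

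The mechanism behind the exponential decay is the standard heuristic that each "extra" row beyond the square case independently lowers the chance of a small corank by roughly a constant factor. Concretely, writing $m = n - \lfloor\alpha n\rfloor$ for the excess in dimensions, the corank of a random $\F_2$-matrix with this much imbalance is at least $(\frac{1}{2}-\alpha)n + O(1)$ with overwhelming probability, and the probability that it drops all the way down to $0$ or $1$ is bounded by a product over the surplus coordinates, each contributing a factor bounded away from $1$. This is exactly the kind of estimate the min-entropy framework of Maples, as adapted in the paper, is designed to supply: it guarantees that the relevant conditional probabilities behave like those for genuinely uniform matrix entries, so the naive independent-factor bound goes through.

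The main obstacle I anticipate is passing from an expectation statement to an exponential tail bound in a way that is actually justified by the results already proved, rather than assumed. If Theorem~\ref{thm:strongMain} only controls the expected $p$-rank and not its lower tail, then I would need to supply a separate argument showing $\P[\text{$2$-rank}\le 1]$ is exponentially small; the cleanest route is to bound this probability by the probability that a random $\F_2$-matrix of the relevant shape has corank at most $1$, and then apply the explicit min-entropy estimates to that event directly. I expect the bookkeeping around the $O(1)$ terms and the precise base $\lambda$ of the exponential to be routine once the corank probability is in hand, so the real content is isolating the correct lower-tail estimate for the $\F_2$-corank and confirming it follows from the machinery established earlier in the paper.
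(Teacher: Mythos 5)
Your proposal is correct and follows essentially the same route as the paper: reduce cyclicity to the necessary condition that the $2$-rank be at most $1$, then use the distributional statement of Theorem~\ref{thm:strongMain} (not just the expectation in Theorem~\ref{thm:main}) together with concentration to show the $2$-rank is at least $2$ with probability $1-O(e^{-cn})$. The paper extracts the exponential tail slightly more directly than you sketch --- it applies Hoeffding's inequality to the binomial $B\left(n,\frac{1}{2}\right)$ and the ``usually within small distance'' relation, rather than returning to the $\F_2$-corank estimates --- but this is the same argument in substance.
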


\noindent Because of the $O(1)$ factor in Theorem~\ref{thm:main}, the theorem gives us no information on the probability that $\Gamma(G)$ is cyclic when $\alpha\geq\frac{1}{2}$. Numerical computations suggest that this probability converges to a constant around $0.60$ when $\alpha>\frac{1}{2}$, and to a constant around $0.29$ when $\alpha=\frac{1}{2}$. 

Here is a brief outline of the paper: In Section~\ref{sec:closeness}, we define when sequences of random variables are ``usually within small distance'', which will give us a useful equivalence relation for random variables. We also give Theorem~\ref{thm:strongMain}, which describes the distribution of the $p$-rank of $S(\Gamma)$, and show that it implies Theorem~\ref{thm:main}.

In Section~\ref{sec:entropy}, we introduce our notion of min-entropy, which is a variant on the one used by Maples in \cite{km}. This notion is meant to replace independence; the matrices we will work with are not independent, but they are ``almost independent'', in the sense described by min-entropy, which will suffice for our purposes. 

In Section~\ref{sec:proof} we introduce a random matrix $M$, whose corank is usually within small distance of the $p$-rank of $\Gamma(G)$. Using the min-entropy properties of $M$, we will show that its corank is also usually within small distance of the distribution given in Theorem~\ref{thm:strongMain}, which will complete our proof.

Section~\ref{sec:background} contains some background information, and Section~\ref{sec:details} contains proof of the corollaries of Theorem~\ref{thm:main}.

\textbf{Acknowledgments}.
The author is grateful to Sam Payne and Nathan Kaplan for suggesting the problem, as well as their many helpful suggestions along the way. Also to Dan Carmon, for suggesting the proof of Claim~\ref{claim:estimation}.

This work was partially supported by NSF CAREER DMS-1149054. 

\section{The Sandpile Group, Binomial Distributions, and Schur Complements}
\label{sec:background}

\subsection{The Sandpile Group.}
In this section, we define the sandpile group. For a more thorough introduction to the subject with some lovely pictures, see \cite{wias}.

Let $G$ be a connected graph on $n$ vertices, numbered $1$ through $n$. The \textbf{laplacian matrix} of $G$ is the $n\times n$ matrix $\Delta=D-A$, where $A$ is the adjacency matrix of $G$ and $D$ is the diagonal degree matrix of $G$. In other words, $\Delta_{ii}=\deg(v_{i})$ and for $i\neq j$, $\Delta_{ij}=-1$ if $G$ has an edge between vertices $i$ and $j$, ans to $0$ otherwise. Note that $\Delta$ is a symmetric matrix whose rows and columns sum to zero, so it is singular. In fact, $\corank(\Delta)$ is equal to the number of connected components of $G$, where we define the corank of an $n\times m$ matrix $A$ as $\min(n,m)-\rank(A)$.

Choose a vertex $i$. The \textbf{reduced laplacian matrix} $\Delta'$ is the $(n-1)\times(n-1)$ matrix obtained by removing row $i$ and its corresponding column from $\Delta$. The \textbf{sandpile group} $\Gamma(G)$ is the cokernel of $\Delta'$, that is, $\Gamma(G)=\Z^{n-1}/\Delta'(\Z^{n-1})$.

It is shown in \cite{wias} that the sandpile group of a graph is independent of the choice of the vertex $i$. Moreover, the Matrix Tree Theorem shows that for a connected graph $G$, the determinant $\det(\Delta')$ is equal to the number of spanning trees of $G$. In particular, $\Delta'$ has full rank, so $\Z^{n-1}/\Delta'(\Z^{n-1})$ is a finite group of order $\det(\Delta')$ and rank at most $n-1$.

If $G$ is disconnected, we define its sandpile group to be the direct sum of the sandpile groups of its connected components. It is easy to see that this is a finite group of rank at most $n-1$. Moreover, it is shown in \cite{ra} that a random bipartite graph $G(n,\alpha,q)$ is connected with probability $1-O(e^{-Kn})$ for some $K>0$ depending only on $q$ and $\alpha$. This will allow us to consider the rank of the cokernel of the reduced laplacian rather than the rank of the sandpile group directly, as they are equal with probability $1-O(e^{-Kn})$.

\subsection{Binomial and Normal Distributions} 
We use $B(n,q)$ for the binomial distribution, the sum of $n$ independent Bernoulli random variables equal to $1$ with probability $q$ and $0$ otherwise. Recall that $\E(B(n,q))=qn$, where $\E(X)$ is the expected value of $X$.

We will make repeated use of Hoeffding's inequality:

\begin{thm}[Hoeffding's inequality]
\label{thm:hoeffding}
Let $B(n,q)$ be the binomial distribution, and let $\epsilon>0$. Then there exists a constant $K>0$, depending only on $q$ and $\epsilon$, such that $\P\left(\left|B(n,q)-qn\right|>\epsilon n\right)<e^{-Kn}$.
\end{thm}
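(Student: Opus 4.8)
The plan is to prove this by the standard exponential-moment (Cram\'er--Chernoff) method. I would write $B(n,q) = \sum_{i=1}^n X_i$, where the $X_i$ are independent Bernoulli variables with $\E(X_i) = q$, and first reduce the two-sided estimate to two one-sided tail bounds via a union bound:
\[
\P\left(\left|B(n,q)-qn\right|>\epsilon n\right) \le \P\left(B(n,q) - qn > \epsilon n\right) + \P\left(qn - B(n,q) > \epsilon n\right).
\]
The two terms are handled symmetrically, since the second is the first applied to the complementary variables $1 - X_i$, so it suffices to bound the upper tail.

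For the upper tail I would fix a parameter $t > 0$ and apply Markov's inequality to the nonnegative random variable $e^{t(B(n,q)-qn)}$, then use independence to factor the moment generating function:
\[
\P\left(B(n,q) - qn > \epsilon n\right) \le e^{-t\epsilon n}\, \E\!\left(e^{t(B(n,q) - qn)}\right) = e^{-t\epsilon n} \prod_{i=1}^n \E\!\left(e^{t(X_i - q)}\right).
\]
The crux is to control each factor. Since $X_i - q$ is mean-zero and takes values in the length-one interval $[-q,\,1-q]$, Hoeffding's lemma gives $\E(e^{t(X_i-q)}) \le e^{t^2/8}$. I would establish this lemma by the usual convexity argument: bound $e^{tx}$ on the interval above by the chord through its endpoints, take expectations to obtain $\E(e^{t(X_i-q)}) \le e^{L(t)}$ for an explicit function $L$, and then verify $L(0) = L'(0) = 0$ together with $L''(t) \le \tfrac14$, so that $L(t) \le t^2/8$ by Taylor's theorem. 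This moment-generating-function bound is the main obstacle, being the only step that is not purely mechanical; everything else is an optimization.

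Combining the estimates yields $\P(B(n,q) - qn > \epsilon n) \le e^{-t\epsilon n + nt^2/8}$, and minimizing the exponent $-t\epsilon + t^2/8$ over $t > 0$ (the minimum occurs at $t = 4\epsilon$) gives the bound $e^{-2\epsilon^2 n}$. The same bound holds for the lower tail, so $\P(|B(n,q) - qn| > \epsilon n) \le 2 e^{-2\epsilon^2 n}$. Finally, choosing any $K < 2\epsilon^2$ (for example $K = \epsilon^2$), the leading factor $2$ is absorbed into the exponential for all sufficiently large $n$, while the remaining finitely many values of $n$ are handled by shrinking $K$ if necessary, since the probability is trivially at most $1$. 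This produces the stated inequality $\P(|B(n,q)-qn|>\epsilon n) < e^{-Kn}$ with $K>0$; notice that the argument in fact makes $K$ depend only on $\epsilon$, which is consistent with the weaker claimed dependence on $q$ and $\epsilon$.
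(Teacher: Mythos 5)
Your argument is the standard Cram\'er--Chernoff proof of Hoeffding's inequality (union bound over the two tails, Markov applied to $e^{t(B(n,q)-qn)}$, Hoeffding's lemma via the chord/convexity bound and the second-derivative estimate $L''\le\frac14$, then optimization at $t=4\epsilon$), and it is correct: each step you outline goes through exactly as you describe, yielding $\P(|B(n,q)-qn|>\epsilon n)\le 2e^{-2\epsilon^2 n}$. The paper does not prove this statement at all; it simply cites a reference, so there is no in-paper argument to compare against, and supplying the classical proof is entirely appropriate. One small caveat: your final step of ``absorbing the factor $2$ by shrinking $K$'' cannot literally rescue the strict inequality $\P(\cdots)<e^{-Kn}$ for every $n$, because for $n=1$ and $\epsilon<\min(q,1-q)$ the left-hand side equals $1$ while $e^{-K}<1$ for any $K>0$; no choice of $K$ fixes this. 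This is a defect of the theorem as stated rather than of your proof --- the paper only ever invokes the inequality in the form $O(e^{-Kn})$ with an implied constant, for which your bound $2e^{-2\epsilon^2 n}$ is exactly what is needed --- but you should either state the conclusion with a multiplicative constant, or restrict to $n$ large, rather than claim the finitely many small $n$ are handled by shrinking $K$. You also correctly observe that $K$ can be taken to depend on $\epsilon$ alone, which is stronger than the dependence claimed in the statement.
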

For the proof, see for example \cite{hf}.

\subsection{Schur Complements}

Finally, we recall the basics of Schur complements, which will be a central tool in our proof. For a more thorough introduction to the subject, see \cite[Chapter 1]{zs}.

\begin{defn}
Let $A$ be an $n\times n$ matrix. Let $S$ be a subset of ${1,\dots,n}$, and let $T$ be the complement of $S$. We write $A_{S,S}$ for the submatrix given by restricting $A$ to the rows and columns whose indices are in $S$, $A_{T,T}$ for the submatrix of rows and columns with indices in $T$, and $A_{S,T}$ for the submatrix of rows in $A$ and columns in $T$.
\end{defn}

\noindent For example, if $S=\{1,\dots,k\}$, then $A=\begin{pmatrix}
A_{S,S} & A_{S,T}\\ 
A_{T,S} & A_{T,T}
\end{pmatrix} $

\begin{defn}
Let $A,S,T$ as above. If $A_{S,S}$ is invertible, then we define the \textbf{Schur complement} $A/S$ (or $A/A_{S,S}$) by $A/S=A_{T,T}-A_{T,S}A_{T,T}^{-1}A_{S,T}$.
\end{defn}

\noindent Note that $A/S$ is a $|T|\times |T|$ matrix.

Recall that the \textbf{corank} of an $n\times m$ matrix $A$ is defined as $\min(n,m)-\rank(A)$. We will use the following theorem several times:

\begin{thm}
Let $A$ and $S$ as above such that $A_{S,S}$ is invertible. Then $\corank(A/S)=\corank(A)$.
\end{thm}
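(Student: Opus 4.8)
The plan is to prove $\corank(A/S)=\corank(A)$.

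The plan is to realize $A/S$ as a diagonal block in a block-diagonalization of $A$ obtained by multiplying on the left and right by invertible matrices, and then read off the relation between coranks. Throughout I work over a field (such as $\F_p$), so that rank is additive over block-diagonal matrices and the hypothesis that $A_{S,S}$ is invertible is equivalent to $\rank(A_{S,S})=|S|$.

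First I would write $A$ in block form with the $S$-indices first,
\[
A=\begin{pmatrix} A_{S,S} & A_{S,T}\\ A_{T,S} & A_{T,T}\end{pmatrix},
\]
and verify the block factorization
\[
A=\begin{pmatrix} I & 0\\ A_{T,S}A_{S,S}^{-1} & I\end{pmatrix}
\begin{pmatrix} A_{S,S} & 0\\ 0 & A/S\end{pmatrix}
\begin{pmatrix} I & A_{S,S}^{-1}A_{S,T}\\ 0 & I\end{pmatrix}.
\]
Expanding the right-hand side, the bottom-left block collapses to $A_{T,S}A_{S,S}^{-1}A_{S,S}=A_{T,S}$, and the bottom-right block is exactly $A_{T,S}A_{S,S}^{-1}A_{S,T}+(A/S)=A_{T,T}$ precisely because of the definition $A/S=A_{T,T}-A_{T,S}A_{S,S}^{-1}A_{S,T}$; the remaining two blocks match on inspection. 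This identity is the one computation to get right, and it is routine.

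The two outer factors are block-unipotent, hence invertible, so left and right multiplication by them preserves rank. Therefore
\[
\rank(A)=\rank\begin{pmatrix} A_{S,S} & 0\\ 0 & A/S\end{pmatrix}=\rank(A_{S,S})+\rank(A/S)=|S|+\rank(A/S),
\]
using $\rank(A_{S,S})=|S|$. Finally I would convert to coranks: since $A$ is $n\times n$ while $A/S$ is $|T|\times|T|$ with $|T|=n-|S|$, we get
\[
\corank(A/S)=|T|-\rank(A/S)=(n-|S|)-\bigl(\rank(A)-|S|\bigr)=n-\rank(A)=\corank(A),
\]
which is the claim.

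The main (and essentially only) obstacle is setting up and verifying the block factorization correctly, keeping the placement of $A_{S,S}^{-1}$ in the two unipotent factors straight; once that identity is in hand, rank additivity over the block diagonal and the corank bookkeeping are immediate. An equivalent phrasing avoiding explicit matrix products is to perform block row and column operations, using $A_{S,S}$ to clear $A_{T,S}$ and $A_{S,T}$, which are rank-preserving and leave $A/S$ in the lower-right corner; I would present whichever is cleaner.
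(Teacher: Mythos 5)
Your proof is correct and follows essentially the same route as the paper: both rest on a block factorization of $A$ by unipotent (hence invertible) matrices that isolates $A/S$, followed by rank bookkeeping. The only cosmetic difference is that you use the full three-factor decomposition with the block-diagonal middle term, whereas the paper uses the two-factor version $A = \bigl(\begin{smallmatrix} I & 0\\ A_{T,S}A_{S,S}^{-1} & I\end{smallmatrix}\bigr)\bigl(\begin{smallmatrix} A_{S,S} & A_{S,T}\\ 0 & A/S\end{smallmatrix}\bigr)$ and then clears $A_{S,T}$ by row reduction; the two are interchangeable.
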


\begin{proof}
Assume that $S$ is composed of the first $k$ entries for some $k\leq n$. It can be seen that
\[A=\begin{pmatrix}
A_{S,S} & A_{S,T}\\ 
A_{T,S} & A_{T,T}
\end{pmatrix} =\begin{pmatrix}
I_{k} & 0 \\ 
A_{T,S}A_{S,S}^{-1} & I_{n-k}
\end{pmatrix} \begin{pmatrix}
A_{S,S} & A_{S,T}\\ 
0 & A/S
\end{pmatrix}. \]

\noindent Where $I_k$ is the $k\times k$ identity matrix. Since the matrix on the left is invertible, we get that 
\[\corank(A)=\corank \begin{pmatrix}
A_{S,S} & A_{S,T}\\ 
0 & A/S
\end{pmatrix}. \]

As $A_{S,S}$ is invertible, row reduction gives us 
\[\corank \begin{pmatrix}
A_{S,S} & A_{S,T}\\ 
0 & A/S
\end{pmatrix} = \corank(A/S), \]

which completes the proof.
\end{proof}

\section{Closeness of Random Variables}
\label{sec:closeness}

In this section, we define when random variables are usually within small distance. This describes the ``closeness'' of random variables in a useful way.

\begin{defn}
\label{defn:closeness}
Let $X_{n},Y_{n}$ be two sequences of random variables. We say $X_{n}$ and $Y_{n}$ are \textbf{usually within small distance} if there exist constants $c,K>0$ such that for every $n,m>0$, $\P(|X_{n}-Y_{n}|\geq m)\leq Ke^{-cm}$.
\end{defn}

We will use the following properties.

\begin{lemma}
\label{lemma:closeness}

\begin{enumerate}
\item If $X_{n},Y_{n}$ and $Y_{n},Z_{n}$ are pairs of sequences of random variables which are usually within small distance, then so are $X_{n},Z_{n}$. Hence being usually within small distance is an equivalence relation for sequences of random variables.
\item If $X_{n},Y_{n}$ are sequences of random variables which usually within small distance, then $\left|\E(X_{n})-\E(Y_{n})\right|=O(1)$.
\item If $\P(X_{n}\neq Y_{n})=1-O(e^{-cn})$ for some constant $c>0$ and $X_{n},Y_{n}$ are bounded by $O(n)$, then $X_n,Y_n$ are usually within small distance.
\item If $X_{n},Y_{n}$ are sequences of random variables and $\max|X_{n}-Y_{n}|<K$ for some constant $K$, then $X_{n},Y_{n}$ are usually within small distance.
\item If $X_{n},Y_{n},Z_{n}$ are sequences of random variables such that $X_{n}\leq Y_{n}\leq Z_{n}$ and $Z_{n}$ is usually within small distance of $X_{n}$, then so is $Y_{n}$.
\item If $X_{n},Y_{n}$ are usually within small distance of $X'_{n},Y'_{n}$ respectively, and $C$ is constant, then $X_{n}+Y_{n},CX_{n},\max(X_{n},Y_{n})$, and $\min(X_{n},Y_{n})$ are usually within small distance of $X'_{n}+Y'_{n},CX'_{n},\max(X'_{n},Y'_{n})$, and $\min(X'_{n},Y'_{n})$ respectively.
\end{enumerate}
\end{lemma}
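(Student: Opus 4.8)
The plan is to prove each of the six parts of Lemma~\ref{lemma:closeness} by directly unwinding Definition~\ref{defn:closeness}, which says $X_n, Y_n$ are usually within small distance if $\P(|X_n - Y_n| \geq m) \leq K e^{-cm}$ for all $n, m$. The recurring tool in almost every part will be the union bound together with the triangle inequality for the random quantity $|X_n - Y_n|$, so the main task is to track how the constants $c, K$ transform under each operation. None of the individual steps is deep; the work is purely in bookkeeping the exponential tail bounds correctly.

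First, for part (1), the transitivity giving an equivalence relation, I would use the triangle inequality $|X_n - Z_n| \leq |X_n - Y_n| + |Y_n - Z_n|$, so if $|X_n - Z_n| \geq m$ then at least one of $|X_n - Y_n| \geq m/2$ or $|Y_n - Z_n| \geq m/2$ holds. A union bound then gives $\P(|X_n - Z_n| \geq m) \leq K_1 e^{-c_1 m/2} + K_2 e^{-c_2 m/2}$, which is bounded by a single expression of the required form. Reflexivity and symmetry are immediate from the definition, completing the equivalence relation claim. For part (2), I would bound $|\E(X_n) - \E(Y_n)| \leq \E|X_n - Y_n| = \sum_{m \geq 0} \P(|X_n - Y_n| > m) \leq \sum_{m \geq 0} K e^{-cm}$, a convergent geometric series whose sum is a constant independent of $n$, giving the $O(1)$ bound. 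Part (4), where $\max|X_n - Y_n| < K$ is bounded by a constant, is the easiest: for $m \geq K$ the probability $\P(|X_n - Y_n| \geq m)$ is zero, and for $m < K$ we may trivially bound the probability by $1 \leq e^{cK} e^{-cm}$ for any fixed $c$, so the required tail bound holds.

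For part (3), where $\P(X_n \neq Y_n) = O(e^{-cn})$ and both are $O(n)$, I would split the tail probability according to whether $m$ is small or large relative to $n$. When $m$ exceeds the $O(n)$ bound on $|X_n - Y_n|$ the probability is zero; when $m$ is of order at most $n$, I bound $\P(|X_n - Y_n| \geq m) \leq \P(X_n \neq Y_n) = O(e^{-cn})$, and since $m = O(n)$ this is itself $O(e^{-c'm})$ after adjusting the constant. Part (5) is a sandwiching argument: if $X_n \leq Y_n \leq Z_n$ then $|Y_n - X_n| \leq |Z_n - X_n|$ pointwise, so $\P(|Y_n - X_n| \geq m) \leq \P(|Z_n - X_n| \geq m)$, and the hypothesis that $Z_n$ is usually within small distance of $X_n$ transfers directly to $Y_n$.

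Finally, part (6) handles the algebraic operations, and this is where I expect the only mild subtlety to arise, mainly in keeping the constants uniform across the four operations simultaneously. For the sum, I would write $|(X_n + Y_n) - (X_n' + Y_n')| \leq |X_n - X_n'| + |Y_n - Y_n'|$ and apply the same union-bound-at-$m/2$ trick as in part (1). For $C X_n$ versus $C X_n'$, I use $|C X_n - C X_n'| = |C|\,|X_n - X_n'|$, so the tail bound transfers with $c$ replaced by $c/|C|$ (handling $C = 0$ trivially). The $\max$ and $\min$ cases follow from the elementary pointwise inequalities $|\max(X_n, Y_n) - \max(X_n', Y_n')| \leq \max(|X_n - X_n'|, |Y_n - Y_n'|) \leq |X_n - X_n'| + |Y_n - Y_n'|$ and the analogous bound for $\min$, reducing both to the sum case. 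The hardest part overall is simply ensuring that each operation preserves the \emph{uniformity} of the constants in $n$ — but since every bound produced is a finite combination of exponentials with $n$-independent constants, this uniformity is automatic, and no genuine obstacle remains.
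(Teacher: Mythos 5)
Your proposal is correct, and since the paper dismisses this lemma with ``The proofs are straightforward,'' your arguments are exactly the standard ones the author intends: triangle inequality plus union bound at $m/2$, tail-sum bound for the expectation, and case-splitting on $m$ versus $n$ for part (3). The only microscopic caveat is that $\E|X_n-Y_n|=\sum_{m\ge 0}\P(|X_n-Y_n|>m)$ is an equality only for integer-valued variables, but the inequality $\le$ holds in general and is all you need.
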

\noindent The proofs are straightforward.

Using the above definition, we can now state the main theorem about the distribution of the $p$-rank of the sandpile group, from which we will deduce Theorem~\ref{thm:main}:

\begin{thm}
\label{thm:strongMain}
Let $G=G(n,\alpha,q)$ be a random bipartite graph, and $p$ a prime. Let $X_{n}=X(n,\alpha,q,p)$ be the $p$-rank of $\Gamma(G)$, and recall that $B(n,q)$ denotes a binomial random variable. Then $X_{n}$ is usually within small distance of $\max\left(B\left(n,\frac{1}{p}\right)-\alpha n,0\right)$, where $B\left(n,\frac{1}{p}\right)$ is the binomial distribution.
\end{thm}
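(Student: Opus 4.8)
The plan is to realize the $p$-rank of $\Gamma(G)$ as the corank of a random matrix over $\F_p$ whose entries are "almost independent" in the min-entropy sense, and then to show that this corank is usually within small distance of $\max(B(n,1/p)-\alpha n,0)$. Since $\Gamma(G)$ is the cokernel of the reduced Laplacian $\Delta'$, and since by the connectivity estimate from \cite{ra} the graph is connected with probability $1-O(e^{-Kn})$, part (3) of Lemma~\ref{lemma:closeness} lets me work with $\corank$ of the reduced Laplacian reduced mod $p$ in place of the $p$-rank of the sandpile group itself. The $p$-rank of a finite abelian group $A$ equals $\dim_{\F_p} A/pA$, which for a cokernel of an integer matrix $\Delta'$ is exactly $\corank_{\F_p}(\Delta' \bmod p)$. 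So the first step is to reduce everything to computing the corank over $\F_p$ of the reduced Laplacian of $G(n,\alpha,q)$.

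Next I would set up the block structure. Writing the Laplacian of the bipartite graph in terms of the $L$-side (of size $n$) and $R$-side (of size $\lfloor \alpha n\rfloor$), the off-diagonal blocks are the bipartite adjacency matrix $A$ with i.i.d.\ $\mathrm{Bernoulli}(q)$ entries, and the diagonal blocks are the degree matrices. The idea I expect the author to exploit is a Schur-complement reduction: using the theorem that $\corank(M/S)=\corank(M)$, I would eliminate one side of the bipartition (the larger $L$-side, since $\alpha<1$), collapsing the $p$-rank computation onto a matrix $M$ of size governed by the smaller side but with "effective rank deficiency" created by the $L$-side. The heuristic that produces the answer is: reducing mod $p$, each of the $n$ columns coming from the $L$-side independently fails to contribute to the rank with probability $1/p$ (this is where the binomial $B(n,1/p)$ comes from), while the $R$-side supplies $\alpha n$ constraints; the corank is then the excess $\max(B(n,1/p)-\alpha n,0)$, with the $\max$ reflecting that corank cannot go negative. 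The main technical work is to define the matrix $M$ precisely and verify that its corank is genuinely within small distance of the $p$-rank, which is the content promised for Section~\ref{sec:proof}.

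The hard part, and the reason the min-entropy machinery of Section~\ref{sec:entropy} is needed, is that after the Schur-complement reduction the entries of $M$ are no longer independent: conditioning on the elimination introduces correlations, and the degree matrix couples the rows. I would therefore prove that $M$ nonetheless has a min-entropy lower bound — that no nonzero linear functional of a column is concentrated on a single value to more than probability $1/p+o(1)$ — so that $M$ behaves, for the purpose of computing corank over $\F_p$, like a matrix with genuinely independent $\mathrm{Bernoulli}$-type entries. The robustness of corank distributions under this "almost independence" is exactly what replaces independence in the argument. Finally, I would compute the corank distribution of the idealized independent model directly: the number of zero rows/columns mod $p$ that survive is a binomial count, and a conditioning-and-induction argument (peeling off columns one at a time and tracking how often a new column lies in the span of the previous ones, an event of probability close to $p^{-k}$ when the current corank is $k$) shows the corank concentrates around $\max(B(n,1/p)-\alpha n,0)$ up to an $O(1)$ additive error, which by Definition~\ref{defn:closeness} and Lemma~\ref{lemma:closeness}(4) is precisely "usually within small distance."
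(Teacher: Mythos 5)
Your high-level strategy matches the paper's: pass to the corank of a mod-$p$ Laplacian-derived matrix, Schur-complement away the $L$-side, and use min-entropy to control ranks. But there are two concrete gaps in the execution. First, you cannot ``eliminate the larger $L$-side'' by a Schur complement, because the diagonal block $D_{1}$ of $L$-side degrees is singular mod $p$: roughly $n/p$ of its diagonal entries vanish, and those vanishing entries are precisely the source of the binomial $B(n,\frac{1}{p})$. The paper only complements away the invertible part $D_{1}'$, leaving a matrix $\bigl(\begin{smallmatrix} 0 & B_{2}\\ B_{2}^{T} & C\end{smallmatrix}\bigr)$ with an $r\times r$ zero block bordered by the $r\times\alpha n$ matrix $B_{2}$; the inequality $\corank \geq r-\rank(B_{2})\geq r-\alpha n$ is what forces the linear-in-$n$ corank when $\alpha<\frac{1}{p}$. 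Relatedly, your final concentration step (peeling off columns with dependence probability near $p^{-k}$) is the argument for a generic matrix with full min-entropy, which concentrates the corank at $O(1)$; it cannot by itself produce a corank of order $(\frac{1}{p}-\alpha)n$. The excess must come from the structural zero block, and your proposal gestures at this (``zero rows/columns that survive'') without supplying the mechanism.

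Second, the min-entropy obstruction appears \emph{before} the Schur complement, not only after it: every row of the Laplacian sums to zero, so each diagonal entry is determined by the rest of its row and has min-entropy exactly $0$; the reduced Laplacian does not fix this for the side opposite the deleted vertex. The min-entropy lower bound you propose to prove for your $M$ is therefore false as stated for the matrix you start from. The paper's remedy is to delete the first and last $p$ rows and columns (the matrix $\Delta_{1}$), so that every remaining row retains enough unconditioned Bernoulli entries outside the matrix to give each upper-triangular entry min-entropy bounded below (Lemma~\ref{lemma:properties}), and to make the diagonal degree entries essentially uniform mod $p$ conditioned on everything else; the discarded $O(1)$ rows and columns change the corank by a bounded amount and are absorbed by Lemma~\ref{lemma:closeness}. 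Without some such surgery your argument cannot get started, so this is a missing idea rather than a routine technicality.
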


We will prove Theorem~\ref{thm:strongMain} in Section~\ref{sec:proof}. First, we show:

\begin{prop}
\label{prop:reduction}
Theorem~\ref{thm:strongMain} implies Theorem~\ref{thm:main}.
\end{prop}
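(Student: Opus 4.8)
The plan is to apply part (2) of Lemma~\ref{lemma:closeness} to reduce the theorem to a purely binomial computation. By Theorem~\ref{thm:strongMain}, $X_n$ is usually within small distance of $W_n := \max(B(n,1/p)-\alpha n,0)$, so $|\E(X_n)-\E(W_n)|=O(1)$, and it suffices to evaluate $\E(W_n)$ up to an additive $O(1)$ in each of the three regimes. Throughout, write $Y_n=B(n,1/p)$, with mean $\mu=n/p$ and variance $\sigma^2=\frac1p\left(1-\frac1p\right)n$.

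The two off-threshold cases I would handle with Hoeffding's inequality (Theorem~\ref{thm:hoeffding}). For $\alpha<1/p$, using the identity $\max(a,0)=a+\max(-a,0)$ I would write
\[
\E(W_n)=\E(Y_n-\alpha n)+\E(\max(\alpha n-Y_n,0))=\left(\tfrac1p-\alpha\right)n+\E(\max(\alpha n-Y_n,0)).
\]
The correction term is supported on $\{Y_n<\alpha n\}$, a deviation of order $n$ below $\mu$, so Hoeffding bounds its probability by $e^{-Kn}$; since $\max(\alpha n-Y_n,0)\le n$ there, the term is $o(1)$, giving case (1). For $\alpha>1/p$, the variable $W_n$ is nonzero only on $\{Y_n>\alpha n\}$, again an event of probability $e^{-Kn}$ on which $W_n\le n$, so $\E(W_n)=o(1)$ and case (2) follows.

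The threshold case $\alpha=1/p$ is the main obstacle, since there $W_n=(Y_n-\mu)^+$ has expectation of order $\sqrt n$ and I must pin down its leading term with error only $O(1)$. The key observation is that $Y_n-\mu$ has mean exactly $0$, so $\E((Y_n-\mu)^+)=\E((Y_n-\mu)^-)$ and hence
\[
\E(W_n)=\tfrac12\,\E|Y_n-\mu|,
\]
half the mean absolute deviation of the binomial. This I would evaluate using the classical closed form for the mean absolute deviation of $B(n,1/p)$ (a single central binomial term, extracted via the absorption identity $k\binom{n}{k}q^k=nq\binom{n-1}{k-1}q^{k-1}$), followed by Stirling's formula applied to that term. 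The result is $\E|Y_n-\mu|=\sigma\sqrt{2/\pi}\,(1+O(1/n))$, so that
\[
\E(W_n)=\frac{\sigma}{\sqrt{2\pi}}+O(1/\sqrt n)=\sqrt{\frac{\frac1p\left(1-\frac1p\right)n}{2\pi}}+O(1),
\]
which is case (3).

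I expect the delicate point to be this Stirling estimate: one must carry the floor $\lfloor\mu\rfloor$ through the closed form and verify that the central binomial coefficient produces exactly the Gaussian constant $\sqrt{2/\pi}$ with a remainder small enough to absorb into $O(1)$. I would note that the more obvious route --- comparing $\E(W_n)$ with $\E(\max(\sigma Z,0))=\sigma/\sqrt{2\pi}$ for $Z\sim N(0,1)$ through the layer-cake formula $\E(W_n)=\int_0^\infty\P(Y_n-\mu>t)\,dt$ and a uniform Berry--Esseen bound on the integrand --- is tempting but does \emph{not} obviously give $O(1)$ error: integrating the $O(1/\sqrt n)$ bound over the effective range of length $\Theta(\sqrt n)$, while simultaneously controlling the binomial and Gaussian tails past the cutoff (which carry a nonnegligible $\Theta(\sqrt n)$ fraction of the mean), seems only to yield an $O(\sqrt{\log n})$ error. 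The mean-deviation identity sidesteps this difficulty entirely, which is why I would route the threshold case through it.
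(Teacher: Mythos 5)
Your proposal is correct, and for the two off-threshold cases it is essentially identical to the paper's argument: both reduce to $\E(\max(B(n,1/p)-\alpha n,0))$ via Lemma~\ref{lemma:closeness}(2) and then dispatch $\alpha\neq 1/p$ with Hoeffding's inequality. The interesting divergence is in the threshold case. The paper proves a bespoke identity (Claim~\ref{claim:estimation}) for the conditional expectation $\E(Y\mid Y>s)$ via the absorption identity $k\binom{n}{k}=n\binom{n-1}{k-1}$ and a telescoping cancellation, arriving at
\[
\E(\max(Y_n-\alpha n,0))=\alpha(1-\alpha)n\,\P\bigl(B(n-1,\alpha)=\lfloor\alpha n\rfloor\bigr),
\]
and then estimates the right-hand side with a cited local De Moivre--Laplace theorem. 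You instead observe that $\E(Y_n-\mu)=0$ forces $\E((Y_n-\mu)^+)=\tfrac12\E|Y_n-\mu|$ and invoke the classical closed form for the mean absolute deviation of the binomial. These are really the same computation in different clothing: De Moivre's MAD formula, after the same absorption identity, reads $\E|Y_n-\mu|=2\alpha(1-\alpha)n\,\P(B(n-1,\alpha)=\lfloor\alpha n\rfloor)$, so your route lands on literally the paper's intermediate expression and requires the same local-limit/Stirling estimate of the central binomial term to relative accuracy $1+O(1/\sqrt n)$ (your stated $1+O(1/n)$ is slightly optimistic but harmless, since even $O(1/\sqrt n)$ relative error times $\sigma=\Theta(\sqrt n)$ is $O(1)$ absolute). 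What your packaging buys is the ability to cite a classical formula rather than prove Claim~\ref{claim:estimation} from scratch; what the paper's buys is self-containedness. Your closing remark about why the layer-cake/Berry--Esseen comparison does not obviously achieve $O(1)$ error is accurate and is presumably why the paper also routes around it.
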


\begin{proof}

By Lemma~\ref{lemma:closeness}, Theorem~\ref{thm:strongMain} implies that 
\[\E(X_{n})=\E\left(\max\left(B\left(n,\frac{1}{p}\right)-\alpha n,0\right)\right)+O(1).\]
Hence it suffices to calculate $\E\left(\max\left(B\left(n,\frac{1}{p}\right)-\alpha n,0\right)\right)$. We will split into three cases, depending on whether $\alpha<\frac{1}{p},\alpha>\frac{1}{p}$, or $\alpha=\frac{1}{p}$.

\vspace{12 pt}
\noindent{\bf The case $\alpha < \frac{1}{p}.$}
\vspace{12 pt}

Note that by Hoeffding's inequality $\P\left(B\left(n,\frac{1}{p}\right)-\alpha n>0\right)=1-O(e^{-cn}) $ for some constant $c>0$. Hence by lemma~\ref{lemma:closeness}, $B\left(n,\frac{1}{p}\right)-\alpha n$ is usually within small distance of $\max\left(B\left(n,\frac{1}{p}\right)-\alpha n,0\right)$. 

Because of this, it suffices to calculate $\E\left(B\left(n,\frac{1}{p}\right)-\alpha n\right)$. Using the additivity of the expected value, we see that
\[\E\left(B\left(n,\frac{1}{p}\right)-\alpha n\right)=\E\left(B\left(n,\frac{1}{p}\right)\right)-\alpha n = \frac{1}{p}n-\alpha n=\left(\frac{1}{p}-\alpha\right)n.\]

\vspace{12 pt}
\noindent{\bf The case $\alpha > \frac{1}{p}.$}
\vspace{12 pt}

This case is similar. Again by Hoeffding's inequality, we get that $\P\left(B\left(n,\frac{1}{p}\right)-\alpha n>0\right)=O(e^{-cn})$ and hence $\max\left(B\left(n,\frac{1}{p}\right)-\alpha n,0\right)$ is equal to $0$ with probability $1-O(e^{-cn})$. Hence $\max\left(B\left(n,\frac{1}{p}\right)-\alpha n,0\right)$ is usually within small distance of $0$, which has expected value $O(1)$.

\vspace{12 pt}
\noindent{\bf The case $\alpha = \frac{1}{p}.$}
\vspace{12 pt}

Finally, the case where $\alpha =\frac{1}{p}$. In this case, we wish to calculate $\E\left(\max\left(B\left(n,\alpha\right)-\alpha n,0\right)\right)$. We will rely on the following claim:
\begin{claim}
\label{claim:estimation}
Let $B(n,\alpha)$ be the binomial distribution, $s$ a positive integer. Then 
\[\E(B(n,\alpha)|B(n,\alpha)>s)=\alpha n+\alpha(1-\alpha)n\frac{\P(B(n-1,\alpha)=s)}{\P(B(n,\alpha)>s)}.\]
\end{claim}

\begin{proof}
Let $Y=B(n,\alpha)$. We wish to calculate $\E(Y|Y>s)=\frac{\sum_{k>s}k\P(Y=k)}{\sum_{k>s}\P(Y=k)}$. Since we expect the main term in the expectation to be $\E\left(B(n,\alpha)\right)= \alpha n$, we wish to estimate
\[\E(Y|Y>s)-\alpha n=\frac{\sum_{k>s}k\P(Y=k)}{\sum_{k>s}\P(Y=k)}-\alpha n = \frac{\sum_{k>s}k\P(Y=k)-\sum_{k>s} \alpha n\P(Y=k)}{\sum_{k>s}\P(Y=k)}.\]

Now consider the two sums 
\begin{align}
& \sum_{k>s} k\P(Y=k) =\sum_{k>s} k\alpha^k(1-\alpha)^{n-k} \binom{n}{k} \\
& \sum_{k>s} \alpha n\P(Y=k) = \alpha n \sum_{k>s} \alpha^k(1-\alpha)^{n-k} \binom{n}{k}.
\end{align}

We manipulate the sums as follows: In sum (1), replace $k\binom{n}{k}$ with the equal $n\binom{n-1}{k-1}$ and take $\alpha n$ out, so that it becomes $\alpha n\sum_{k>s}\alpha^{k-1}(1-\alpha)^{n-k}\binom{n-1}{k-1}$.

Now, multiply by $\alpha+(1-\alpha)=1$, and expand, to obtain the two sums $(1)=(1a)+(1b)$, where
\begin{align*}
&(1a)\  &&\alpha n\sum_{k>s} \alpha^{k}(1-\alpha)^{n-k}\binom{n-1}{k-1} \\
&(1b)\  &&\alpha n\sum_{k>s} \alpha^{k-1}(1-\alpha)^{n-k+1}\binom{n-1}{k-1}.
\end{align*}

For sum (2), use $\binom{n}{k}=\binom{n-1}{k}+\binom{n-1}{k-1}$ to obtain $(2)=(2a)+(2b)$, where
\begin{align*}
&(2a)\  &&\alpha n\sum_{k>s} \alpha^{k}(1-\alpha)^{n-k}\binom{n-1}{k} \\
&(2b)\  &&\alpha n\sum_{k>s} \alpha^{k}(1-\alpha)^{n-k}\binom{n-1}{k-1}.
\end{align*}

Now the difference $(1)-(2)$ cancels out! Observe that $(1a)=(2b)$, whereas
$(1b)$ and $(2a)$ are just shifts of each other, so the difference cancels out in a telescopic sum, and we obtain
\[(1)-(2)=(1b)-(2a)=\alpha n \left(\alpha^{s}(1-\alpha)^{n-s}\binom{n-1}{s}\right)=n\alpha (1-\alpha)\P(B(n-1,\alpha)=s). \]

Finally, putting our expression for $(1)-(2)$ back in our equation for the expectation, we get 

\begin{align*}
\E(Y|Y>s)-\alpha n &= \frac{\sum_{k>s}k\P(Y=k)-\sum_{k>s} \alpha n\P(Y=k)}{\sum_{k>s}\P(Y=k)} \\
&= \frac{n\alpha (1-\alpha)\P(B(n-1,\alpha)=s)}{\P(Y>s)},
\end{align*}
\noindent which completes the proof of the claim.
\end{proof}

For estimating $\E\left(\max\left(B\left(n,\alpha\right)-\alpha n,0\right)\right)$, the following version of the De Moivre-Laplace theorem will be useful.
\begin{thm}[{\cite[Theorem 2]{mt}}]
\label{thm:dml}
Let $s$ be an integer such that $|\alpha n-s|<\sqrt{n}$. Then 
\[\P(B(n,\alpha)=s)=\frac{1}{\sqrt{2\pi \alpha (1-\alpha)n}} e^{-\frac{(s-\alpha n)^{2}}{2\alpha(1-\alpha)n}}\left(1+O\left(\frac{1}{\sqrt{n}}\right)\right).\]
\end{thm}

In our calculation, we will need to estimate $\P(B(n-1,\alpha)=s)$ for $s=\lfloor\alpha n\rfloor$. As $(s-\alpha n)^{2}\leq 1$, we get that 
\[\left|e^{-\frac{(s-\alpha (n-1))^{2}}{2\alpha(1-\alpha)(n-1)}}-1\right|\leq\left|e^{-\frac{1}{2\alpha(1-\alpha)(n-1)}}-1\right|=O\left(\frac{1}{n}\right),\]
and hence $ e^{-\frac{(s-\alpha (n-1))^{2}}{2\alpha(1-\alpha)(n-1)}}=1+O\left(\frac{1}{n}\right)$. As $\frac{1}{\sqrt{n-1}}=\frac{1}{\sqrt{n}}\left(1+O\left(\frac{1}{\sqrt{n}}\right)\right)$ we have by Theorem~\ref{thm:dml}: 

\begin{align}
\nonumber \P(B(n-1,\alpha)=s)&=\frac{1}{\sqrt{2\pi \alpha (1-\alpha)(n-1)}} e^{-\frac{(s-\alpha (n-1))^{2}}{2\alpha(1-\alpha)(n-1)}}\left(1+O\left(\frac{1}{\sqrt{n}}\right)\right) \\ \nonumber
&=\frac{1}{\sqrt{2\pi \alpha (1-\alpha)n}} \left(1+O\left(\frac{1}{\sqrt{n}}\right)\right) \left( 1+O\left(\frac{1}{n}\right) \right) \left(1+O\left(\frac{1}{\sqrt{n}}\right)\right)\\ 
&=\frac{1}{\sqrt{2\pi \alpha (1-\alpha)n}} \left(1+O\left(\frac{1}{\sqrt{n}}\right)\right). \label{eqn:test}
\end{align}

Recall that we wish to estimate $\E\left(\max\left(B\left(n,\alpha\right)-\alpha n,0\right)\right)$. 

\begin{align*}
\E(\max(B(n,\alpha)-\alpha n,0)&=\P(B(n,\alpha)>\lfloor\alpha n\rfloor)\E(B(n,\alpha)-\alpha n|B(n,\alpha)>\lfloor\alpha n\rfloor)\\
&=\P(B(n,\alpha)>\lfloor\alpha n\rfloor)(\E(B(n,\alpha)|B(n,\alpha)>\lfloor\alpha n\rfloor)-\alpha n).
\end{align*}
Using Claim~\ref{claim:estimation} with $s=\lfloor\alpha n\rfloor$, we get:
\begin{align*}
\P(B(n,\alpha)>\lfloor\alpha n\rfloor)&(\E(B(n,\alpha)|B(n,\alpha)>\lfloor\alpha n\rfloor)-\alpha n) \\
&=\P(B(n,\alpha)>\lfloor\alpha n\rfloor)\left(\alpha n+\alpha(1-\alpha)n\frac{\P(B(n-1,\alpha)=\lfloor\alpha n\rfloor)}{\P(B(n,\alpha)>\lfloor\alpha n\rfloor)}-\alpha n\right)\\
&=\alpha(1-\alpha)n\P(B(n-1,\alpha)=\lfloor\alpha n\rfloor).
\end{align*}
Finally, using \ref{eqn:test}, we get:
\begin{align*}
\alpha(1-\alpha)n\P(B(n-1,\alpha)=\lfloor\alpha n\rfloor) &=\alpha(1-\alpha)n\frac{1}{\sqrt{2\pi \alpha (1-\alpha)n}} \left(1+O\left(\frac{1}{\sqrt{n}}\right)\right)\\
&=\sqrt{\frac{\alpha(1-\alpha)n}{2\pi}}\left(1+O\left(\frac{1}{\sqrt{n}}\right)\right)\\
&=\sqrt{\frac{\alpha(1-\alpha)n}{2\pi}}+O(1).
\end{align*}

and substituting $\alpha=\frac{1}{p}$ gives us the expression from Theorem~\ref{thm:main}.
\end{proof}

\section{Min-Entropy and Random Matrix Rank}
\label{sec:entropy}

In this section, we define our notion of min-entropy, which is a variant on the definition given by Maples in \cite{km} and use it to prove some lemmas which will be useful in the proof of Theorem~\ref{thm:strongMain}.

\begin{defn}
Let $A$ be a random matrix over $\Z/p\Z$. Let $\beta>0$, and let $I$ be a set of entries in $A$. We say that \textbf{an entry $A_{i_{0}j_{0}}\in A$ has min-entropy at least $\beta$ with respect to $I$} if, for any choice of values $a_{ij}$ for the entries in $I$ that can occur with nonzero probability, and every $a\in \Z/p\Z$, the probability $\P(A_{i_{0}j_{0}}=a|A_{ij}=a_{ij}\forall (i,j)\in I)$ is at most $1-\beta$.

We say that \textbf{the matrix $A$ has min-entropy at least $\beta$} if every entry of $A$ has min-entropy at least $\beta$ with respect to the set of all other entries. 
\end{defn}

In other words, $A_{ij}$ has min-entropy greater than $\beta$ relative to a set of entries if fixing them cannot control $A_{ij}$, in the sense that it still has probability at most $1-\beta$ of being any specific value.  We can think of min-entropy as a bound on how much fixing some entries of a matrix can influence other entries. We illustrate this notion of min-entropy with the following examples.

If all the entries of $A$ are independent, the min-entropy of $A_{ij}$ is simply $\min_{x}(1-\mathbb{P}(A_{ij}=x))$. In particular, if the entries of $A_{ij}$ are all independent and uniformly distributed in $\mathbb{Z}/p\Z$, this min-entropy is $1-\frac{1}{p}$, which is the highest possible.

For another example, consider $\Delta=\Delta(n,\alpha,q)$, the laplacian matrix of a random bipartite graph. Since every row in $\Delta$ sums to zero, for any entry $\Delta_{ij}$, fixing the rest of the entries in row $i$ determines $\Delta_{ij}$. Hence $\Delta_{ij}$ has zero min-entropy with respect to the rest of the entries in row $i$.

\begin{thm}
\label{thm:rectrank}
Let $A$ be an $n\times m$ random matrix over $\Z/p\Z$, for $m\geq n$, with min-entropy at least $\beta$ for some $\beta>0$. Then the probability that $A$ has rank $n$ is at least $1-\frac{1}{\beta^{2}}(1-\beta)^{m+1-n}$. In particular, there exists a constant $K>0$ depending only on $\beta$ such that $\P(\rank(A)=n)\geq 1-e^{-K(m-n)}$.
\end{thm}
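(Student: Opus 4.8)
The goal is to bound the probability that an $n \times m$ matrix $A$ (with $m \geq n$) fails to have full row rank $n$, using only the min-entropy hypothesis. My plan is to bound the probability that the rows are linearly dependent over $\Z/p\Z$ by a union bound over possible nonzero vectors $v$ in the kernel of $A^{T}$, i.e.\ over nonzero covectors $v \in (\Z/p\Z)^{n}$ annihilating the rows. Equivalently, I will reveal the columns of $A$ one at a time and track the growth of the rank, showing that each new column is very likely to increase the rank as long as we are short of full rank. Let me sketch the column-exposure approach, since it matches the exponential-in-$(m-n)$ bound in the statement.

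Let me reveal the $m$ columns $c_1, \dots, c_m$ one at a time, letting $r_k = \rank(c_1, \dots, c_k)$ be the rank after the first $k$ columns. The key step is to show that conditioned on any fixing of the first $k-1$ columns for which $r_{k-1} < n$, the probability that $c_k$ lies in the span $V$ of $c_1, \dots, c_{k-1}$ is at most $1 - \beta$. To see this, since $\dim V = r_{k-1} \le n-1 < n$, there is a nonzero covector $v$ with $v \cdot w = 0$ for all $w \in V$; then $c_k \in V$ forces $v \cdot c_k = 0$. Pick a coordinate $i_0$ with $v_{i_0} \ne 0$; the linear equation $v \cdot c_k = 0$ determines the entry $A_{i_0 k}$ as an affine function of the other entries of column $k$ (whose values in column $k$ are not yet fixed). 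Conditioning on the first $k-1$ columns and on all entries of column $k$ other than $A_{i_0 k}$, the min-entropy hypothesis says $A_{i_0 k}$ takes any prescribed value with probability at most $1-\beta$; hence $\P(c_k \in V \mid \text{past}) \le 1-\beta$. The main obstacle is making this conditioning argument clean: one must verify that the relevant conditioning is of the form allowed in the definition of min-entropy (fixing a set $I$ of other entries), and handle that the ``bad'' covector $v$ may depend on the already-revealed columns, which is fine because $i_0$ and the conditioning are chosen after the past is fixed.

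With this step in hand, the process $r_k$ behaves like a sum of Bernoulli trials: as long as $r_{k-1} < n$, the rank increases with probability at least $\beta$ at each step. To reach full rank $n$ from $n$ successes among $m$ columns, failure requires fewer than $n$ successes in $m$ trials, i.e.\ at least $m+1-n$ of the trials are ``failures'' (columns landing in the current span). Stochastically dominating $r_k$ below by a negative-binomial-type count, I get
\[
\P(\rank(A) < n) \le \sum_{j \ge m+1-n} \binom{?}{?}(1-\beta)^{j}\cdots,
\]
which I will organize as a clean tail estimate. The concrete bound $1 - \tfrac{1}{\beta^2}(1-\beta)^{m+1-n}$ should fall out by summing the probability over the (at most) $\binom{m}{\,\le n-1}$ ways to place the successes, bounded crudely; the factor $\tfrac{1}{\beta^2}$ comes from summing a geometric-type series in the number of excess failures. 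Finally, since $1 - \tfrac{1}{\beta^2}(1-\beta)^{m+1-n} \ge 1 - e^{-K(m-n)}$ for $K = -\log(1-\beta) > 0$ (absorbing the $\tfrac{1}{\beta^2}$ and the shift by one into the constant for large $m-n$, and noting the bound is trivial when $m-n$ is bounded), the second assertion follows. The heart of the argument, and the only genuinely delicate point, is the per-column bound $\P(c_k \in V \mid \text{past}) \le 1-\beta$ via min-entropy; the tail estimate afterward is routine.
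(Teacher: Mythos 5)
Your per-column exposure scheme contains a genuine gap, and it sits exactly at the step you defer as ``routine.'' The per-column bound itself is fine as far as it goes: conditioning on a subset of the other entries preserves the min-entropy bound by averaging, so indeed $\P(c_k\in V\mid \text{past})\le 1-\beta$. But this bound does not improve as $m-n$ grows, so the most your process argument can give is
\[
\P(\rank(A)<n)\ \le\ \P\bigl(B(m,1-\beta)\ge m+1-n\bigr),
\]
and this binomial tail is \emph{not} $O\bigl(\beta^{-2}(1-\beta)^{m+1-n}\bigr)$: the combinatorial factor $\binom{m}{j}$ cannot be ``bounded crudely'' away. Concretely, for $m=n$ your argument requires all $m$ columns to succeed and yields only $\P(\rank(A)=n)\ge\beta^{n}\to 0$, whereas the theorem asserts the $n$-independent bound $1-\beta^{-2}(1-\beta)$, a positive constant once $\beta>(\sqrt5-1)/2$; for $\beta=\tfrac12$ and $m=2n$ your tail is $\P(B(2n,\tfrac12)\ge n+1)\to\tfrac12$, while the theorem claims failure probability at most $2^{1-n}$. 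The root cause is that a column of length $n$ lying in a subspace of codimension $n-r$ is pinned down on only $n-r$ coordinates, and $n-r$ can be as small as $1$, so a single covector constraint buys you only one factor of $1-\beta$ per column.

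Two repairs are possible. (i) Keep columns but use all $n-r_{k-1}$ independent annihilating covectors, chosen in echelon form with distinct pivot coordinates, to get $\P(c_k\in V\mid r_{k-1}=r)\le(1-\beta)^{n-r}$, and then redo the tail estimate over failure patterns; this is workable but messy, since the exponent degrades as the rank grows. (ii) Do what the paper does and expose \emph{rows} instead: given that $v_1,\dots,v_{i-1}$ are independent, there is a pivot set $J$ of size $i-1$ on which they determine unique coefficients $a_1,\dots,a_{i-1}$, and $v_i$ can be dependent on them only if it agrees with $\sum_l a_l v_l$ on all $m-(i-1)\ge m-n+1$ coordinates outside $J$; revealing those entries one at a time, min-entropy gives failure probability at most $(1-\beta)^{m-i+1}$ for each row, and the product $\prod_{i=1}^{n}\bigl(1-(1-\beta)^{m-i+1}\bigr)$ is bounded below by $1-\beta^{-2}(1-\beta)^{m+1-n}$ via $x\ge 1+\log x$ and a geometric series. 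The asymmetry between rows (length $m$, so a dependent row is overdetermined on $m-i+1$ coordinates) and columns (length $n$, constrained on as few as one coordinate) is precisely what makes the row version work and the column version fail.
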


\begin{proof}
Let $v_{1},\dots,v_{n}$ be the rows of $A$. Then $A$ has rank $n$ only if the $v_{i}$ are independent, so the probability $\P(\rank(A)=n)$ is equal to the product 
\[\prod_{i=1}^{n}\mathbb{P}(v_{i}\text{ is independent of }\{v_{1},\dots,v_{i-1}\}|\{v_{1},\dots,v_{i-1}\} \text{ are independent}).\]

We now note that for each $i$, 
\[\P(v_{i}\text{ is independent of }\{v_{1},\dots,v_{i-1}\}|\{v_{1},\dots,v_{i-1}\} \text{ are independent})\geq (1-\beta)^{m-(i-1)}. \]
To see this, assume that $\{v_{1},\dots,v_{i-1}\}$ are independent. Then there exists a subset $J=\{j_{1},\dots,j_{i-1}\}\subset [m]$ such that the restrictions of the $\{v_{j}\}_{j<i}$ to the entries in $J$ are independent. 

Assume that $J=\{1,\dots,i-1\}$. by the independence of the $v_{l}|_{J}$, there exist unique coefficients $a_{1},\dots,a_{i-1}$ such that for all $j<i$, $(v_{i})_{j}=\sum_{l<i} a_{l}(v_{l})_{j}$. 

$v_{i}$ is dependent on $\{v_{1},\dots,v_{i-1}\}$ only if there exists a linear combination of them that sums to $v_{i}$. By the uniqueness of the coefficients $a_{1},\dots,a_{i-1}$, this happens only if $v_{i}=\sum_{l<i}a_{l}v_{l}$. In particular, $v_{i}$ is dependent on the previous row vectors only if for all $j\geq i$, $(v_{i})_{j}=\sum_{l<i} a_{l}(v_{l})_{j}$.

However, by the min-entropy assumption, this happens for each $j$ with probability at most $1-\beta$. As there are $m-(i-1)$ such entries, the probability that this equality holds for all of them is at most $(1-\beta)^{m-(i-1)}$. Hence the probability that $v_{i}$ is independent of $\{v_{1},\dots,v_{i-1}\}$ is at least $1-(1-\beta)^{m-(i-1)}$. Using this, we get the following bound 

\begin{align}
\prod_{i=1}^{n}\mathbb{P}(v_{i}\text{ is independent of }\{v_{1},\dots,v_{i-1}\}|\{v_{1},\dots,v_{i-1}\} \text{ are independent}) \nonumber \\
\geq \prod_{i=1}^{n}(1-(1-\beta)^{m-(i-1)})=(1-(1-\beta)^{m})\cdots(1-(1-\beta)^{m+1-n}).\label{eqn:betaprod}
\end{align}

We now wish to bound (\ref{eqn:betaprod}) from below.
\begin{claim}
The product $(1-(1-\beta)^{m})\cdots(1-(1-\beta)^{m+1-n})$ is at least $1-\frac{1}{\beta^{2}}(1-\beta)^{m+1-n}$.
\end{claim}

Write $\gamma=1-\beta,r=m+1-n$. We need to find a lower bound on the product $(1-\gamma^{m})\cdots(1-\gamma^{r})$. We will rely on the fact that $0<\gamma<1$.

First, recall that for any positive $x$, $x\geq\log(x)+1$. Using this for $x=(1-\gamma^{m})\cdots(1-\gamma^{r})$, we get:
\[(1-\gamma^{m})\cdots(1-\gamma^{r})\geq 1+\log\left((1-\gamma^{m})\cdots(1-\gamma^{r})\right)\]
Now split the product to get:
\[1+\log((1-\gamma^{m})\cdots(1-\gamma^{r}))=1+\sum_{i=r}^{m}\log(1-\gamma^{i})\]

For any $0<x<1$, $\log(1-x)>-\frac{x}{1-x}$. To see this, let $h(t)=\frac{1}{1-x}(t-(1-x))+\log(1-x)$ be the tangent line to $\log(t)$ at $t=1-x$. Then as $\log(t)$ is concave, $h(1)=\frac{x}{1-x}+\log(1-x)>\log(1)=0$, so $\log(1-x)>-\frac{x}{1-x}$. Using this for $x=1-\gamma^{i}$, we get:
\[1+\sum_{i=r}^{m}\log(1-\gamma^{i})\geq 1+\sum_{i=r}^{m}\frac{-\gamma^{i}}{1-\gamma^{i}}\]
As $\gamma<1$, we have:
\[1+\sum_{i=r}^{m}-\frac{\gamma^{i}}{1-\gamma^{i}}\geq 1+\sum_{i=r}^{m}\frac{-\gamma^{i}}{1-\gamma}=1-\frac{1}{1-\gamma}\sum_{i=r}^{m}\gamma^{i}\]

We will now bound this by the sum of the infinite series:
\[1-\frac{1}{1-\gamma}\sum_{i=r}^{m}\gamma^{i}\geq1-\frac{1}{1-\gamma}\sum_{i=r}^{\infty}\gamma^{i}=1-\frac{\gamma^{r}}{(1-\gamma)^{2}}\].

Translating back through $\gamma=1-\beta$,$r=m+1-n$, this is $1-\frac{1}{\beta^{2}}(1-\beta)^{m+1-n}$, which proves the claim, and the theorem follows.
\end{proof}

\begin{cor}
\label{cor:rectrank}
Let $A_{n}$ be an $n\times m$ random matrix over $\Z/p\Z$ with min-entropy at least $\beta$ for some $\beta>0$ independent of $n$. Then $\corank(A)$ is usually within small distance of $0$.
\end{cor}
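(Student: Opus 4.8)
The plan is to reduce the statement to a uniform tail bound of the form $\P(\corank(A) \ge k) \le K e^{-ck}$, valid for all $n$ (and all $m$) with $c, K > 0$ depending only on $\beta$; by Definition~\ref{defn:closeness} applied with the second sequence identically $0$, this is exactly the assertion that $\corank(A)$ is usually within small distance of $0$ (renaming the distance variable to avoid clashing with the column count $m$). Since both $\corank$ and the min-entropy hypothesis are invariant under transposition — the min-entropy condition is symmetric in the entries — I may assume without loss of generality that $m \ge n$, so that $\corank(A) = n - \rank(A)$; the case $m < n$ follows by applying the argument to $A^{T}$.

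The key idea is to convert a large corank into the failure of a rectangular submatrix to have full rank, and then quote Theorem~\ref{thm:rectrank}. Fix $1 \le k \le n$ and suppose $\corank(A) \ge k$, i.e. $\rank(A) \le n - k$ (for $k > n$ the event is impossible). Let $A'$ be the submatrix formed by the first $n-k+1$ rows of $A$ together with all $m$ columns. Deleting rows cannot increase rank, so $\rank(A') \le \rank(A) \le n-k < n-k+1$, meaning $A'$ fails to have full row rank; hence $\{\corank(A) \ge k\} \subseteq \{\rank(A') < n-k+1\}$. Now $A'$ is an $(n-k+1)\times m$ matrix with $m \ge n \ge n-k+1$, and I claim it inherits min-entropy at least $\beta$, so Theorem~\ref{thm:rectrank} applies and yields
\[
\P\big(\corank(A) \ge k\big) \;\le\; \P\big(\rank(A') < n-k+1\big) \;\le\; \frac{1}{\beta^{2}}(1-\beta)^{m-n+k} \;\le\; \frac{1}{\beta^{2}}(1-\beta)^{k}.
\]
Setting $c = \log\frac{1}{1-\beta} > 0$ and $K = \frac{1}{\beta^{2}} \ge 1$ (both depending only on $\beta$) gives $\P(\corank(A)\ge k) \le K e^{-ck}$; the thresholds $k \le 0$ are trivial since the probability is at most $1 \le K$, and non-integer thresholds are handled by rounding up, which completes the verification against Definition~\ref{defn:closeness}.

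The one point that needs genuine care — and the main obstacle beyond bookkeeping — is the claim that a submatrix of a matrix with min-entropy at least $\beta$ again has min-entropy at least $\beta$, since this is precisely what licenses applying Theorem~\ref{thm:rectrank} to $A'$. I would prove this by observing that min-entropy is monotone under restricting the conditioning set: if $I' \subseteq I$ and $\P(A_{i_{0}j_{0}}=a \mid A_{I}) \le 1-\beta$ for every admissible fixing of the entries in $I$, then $\P(A_{i_{0}j_{0}}=a \mid A_{I'})$ is a convex combination (over the remaining entries of $I$) of such conditional probabilities, hence also at most $1-\beta$. Consequently each entry of $A'$, viewed relative to the other entries of $A'$, has min-entropy at least $\beta$, so $A'$ satisfies the hypotheses of Theorem~\ref{thm:rectrank}. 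I expect this monotonicity under restriction, together with the invariance of the hypotheses under transposition used in the first paragraph, to be the only steps requiring explicit justification; everything else is a direct repackaging of Theorem~\ref{thm:rectrank}.
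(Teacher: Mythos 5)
Your proposal is correct and follows essentially the same route as the paper: pass to the submatrix of the first $n-k+1$ (the paper uses $n-s$) rows, apply Theorem~\ref{thm:rectrank} to bound the probability it fails to have full row rank, and read off the exponential tail in $k$ uniformly in $n$. The two points you single out for care --- the reduction to $m\ge n$ by transposition and the fact that a submatrix inherits min-entropy at least $\beta$ via averaging over the discarded conditioning entries --- are left implicit in the paper, and your justifications of both are sound.
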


\begin{proof}
Let $s>0$, and assume that $m\geq n$. We wish to show that $\P(\corank(A)>s)=O(e^{-Ks})$, where $K>0$ is independent of $n$.
Let $A'$ be the submatrix of $A$ given by taking the first $n-s$ rows. Then $A'$ is an $n-s\times m$ matrix, so by Theorem~\ref{thm:rectrank}, its rows are independent with probability a probability at least $1-e^{-K(m-(n-s))}=1-(e^{-K(m-n)})e^{-Ks}\geq 1-e^{-Ks}$, where $K$ depends only on $\beta$. But if $A'$ has rank $n-s$, the corank of $A$ is at most $s$, so $\P(\corank(A)>s)\leq e^{-Ks}$.
\end{proof}

\section{Proof of Theorem~\ref{thm:strongMain}}
\label{sec:proof}

For this section, we fix a prime $p$, as well as constants $0<\alpha,q<1$.

We will now prove Theorem~\ref{thm:strongMain}. We do this in two stages. First, we reduce the laplacian mod $p$, remove the first and last $p$ rows and columns, and set the diagonal entries to be uniformly distributed mod $p$. We call the resulting matrix $M$. We show that $\corank(M)$ is usually within small distance of the $p$-rank of the sandpile group, which reduces Theorem~\ref{thm:strongMain} to calculating the distribution of $\corank(M)$.

In the second stage, we calculate the distribution of $\corank(M)$. Removing some of the rows and columns of the laplacian will allow the upper triangular entries of $M$ to have positive min-entropy with respect to the other upper triangular matrix, which will allow us to use Corollary~\ref{cor:rectrank} to compute $\corank(M)$. 

\subsection{Reduction to $M$}

Let $(L,R)$ be the vertices of our random bipartite graph $G$, and let $\Delta$ be the laplacian of $G$. Note that $\Delta$ is of the form $\bigl(\begin{smallmatrix}
D_{0,1} & -A_{0} \\ 
-A_{0}^{T} & D_{0,2}
\end{smallmatrix}\bigr)$, where $A_{0}$ is the adjacency matrix between $L$ and $R$ and $D_{0,1}$ and $D_{0,2}$ are diagonal matrices. Since we wish to work over $\Z/p\Z$, we will consider $\Delta\otimes \Z/p\Z=\Delta/p$.

As we saw earlier, $\Delta/p$ has min-entropy $0$. We resolve this issue by using the submatrix $\Delta_{1}$, which has positive min-entropy.
\begin{defn}
Let $G$ be a bipartite graph with laplacian $\Delta$, $p$ prime. We define the matrix $\Delta_{1}=\Delta_{1}(G)=\Delta_{1}(n,\alpha,q,p)$ over $\Z/p\Z$ to be the submatrix of $\Delta/p$ given by removing the first $p$ rows, the first $p$ columns, the last $p$ rows, and the last $p$ columns.
\end{defn}

\begin{lemma}
\label{lemma:properties}
Let $G=G(n,\alpha,q)$ be as above, and let $\Delta_{1}=\Delta_{1}(G)$. Write $\Delta_{1}=\bigl(\begin{smallmatrix}
D_{1,1} & -A_{1} \\ 
-A_{1}^{T} & D_{1,2}
\end{smallmatrix}\bigr)$. $\Delta_{1}$ has the following properties:
\begin{enumerate}
\item The diagonal values of $D_{1,1}$ are independent of each other, as well as of entries of $A_{1}$ outside of their row.
\item The diagonal values of $D_{1,2}$ are independent of each other, as well as of entries of $A_{1}$ outside of their column.
\item There exists $\beta>0$ depending only on $p,q,$ and $\alpha$ such that every non-constant entry in or above the diagonal in $\Delta_{1}$ has min-entropy at least $\beta$ with respect to the set of the entries in or above the diagonal.
\item For any $a\in\Z/p\Z$, and any diagonal entry $x$ in $D_{1,1}$ or $D_{1,2}$, $\P(x=a)= \frac{1}{p}+O(e^{-cn})$ for some constant $c$.
\end{enumerate}
\end{lemma}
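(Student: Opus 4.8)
The plan is to work directly from the combinatorial description of the entries of $\Delta_1$. Write $L' \subseteq L$ and $R' \subseteq R$ for the vertex sets remaining after deleting the first $p$ and last $p$ rows and columns, so that $|L'| = n - p$ and $|R'| = \lfloor \alpha n \rfloor - p$ (well-defined once $n$ is large). The crucial observation is that, since $\Delta_1$ is a submatrix of $\Delta/p$ rather than the Laplacian of the induced subgraph, each diagonal entry of $D_{1,1}$ (resp. $D_{1,2}$) is the \emph{full} degree in $G$ of a vertex $v \in L'$ (resp. $u \in R'$) reduced mod $p$, namely $\deg_G(v)=B(\lfloor\alpha n\rfloor,q)$ (resp. $\deg_G(u)=B(n,q)$), whereas the block $A_1$ only records the edges between $L'$ and $R'$. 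Consequently, for each $v \in L'$ there are exactly $p$ \emph{hidden} edges to the deleted vertices of $R$, whose indicators form an independent family with sum $S_v = B(p,q)$ that appears nowhere else in $\Delta_1$; symmetrically, each $u \in R'$ has a hidden sum $T_u = B(p,q)$ from the deleted vertices of $L$. These hidden sums will supply the entropy needed for (3).

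For (1) and (2) I would argue via disjointness of edge supports. The degree of $v \in L'$ is a function only of the edges incident to $v$; distinct vertices have disjoint incident-edge sets, and any entry of $A_1$ outside row $v$ records an edge not incident to $v$. Since all edge indicators of $G$ are independent, the entries of $D_{1,1}$ are mutually independent and independent of the $A_1$-entries outside their own row, which is (1); (2) is identical with the roles of $L'$ and $R'$ exchanged.

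The main work, and the main obstacle, is (3). The non-constant entries in or above the diagonal are the diagonal degrees in $D_{1,1}$ and $D_{1,2}$ together with the entries of $-A_1$, and each must be bounded below in min-entropy after conditioning on all the others. For an entry $-[v \sim u]$ of $A_1$, conditioning on every other above-diagonal entry fixes all $A_1$-edges in row $v$ and column $u$ and the two relevant degrees, yielding exactly two constraints of the form $[v\sim u] + S_v \equiv c_1$ and $[v\sim u] + T_u \equiv c_2 \pmod p$, where $S_v$, $T_u$ and $[v\sim u]$ are mutually independent. Computing $\P([v\sim u]=1 \mid \text{constraints})$ by Bayes gives a ratio whose numerator and denominator are products of the probabilities $\P(S_v \equiv \cdot)$ and $\P(T_u \equiv \cdot)$; since $S_v = B(p,q)$ attains every residue mod $p$ with probability bounded below by a constant $\delta(p,q) > 0$ (and likewise $T_u$), this conditional probability is bounded away from both $0$ and $1$. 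For a diagonal entry $D_{1,1}(v)$, conditioning on the other above-diagonal entries fixes all edges from $v$ into $R'$ but leaves $S_v$ completely free, so the conditional law is $c + S_v \bmod p$, whose maximal atom is at most $1 - \delta(p,q)$. Taking $\beta$ to be the minimum of these constant lower bounds (which depend only on $p$ and $q$) establishes (3); the role of deleting $p$ rows and columns is precisely to manufacture the free variables $S_v$, $T_u$ that decouple the degree constraints.

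Finally, (4) is the statement that a binomial reduced mod $p$ equidistributes. I would expand $\P(B(N,q) \equiv a \pmod p)$ using additive characters: with $\omega = e^{2\pi i/p}$ one has $\P(B(N,q)\equiv a) = \frac{1}{p}\sum_{k=0}^{p-1}\omega^{-ak}(1-q+q\omega^k)^N$. The $k=0$ term contributes $\frac{1}{p}$, and for $k \neq 0$ the factor satisfies $|1-q+q\omega^k| < 1$ (using $0 < q < 1$), so each remaining term is $O(\lambda^N)$ with $\lambda < 1$. Since $N = \lfloor\alpha n\rfloor$ or $N = n$ is of order $n$, this gives $\P = \frac{1}{p} + O(e^{-cn})$ for the diagonal entries of both $D_{1,1}$ and $D_{1,2}$, which is (4).
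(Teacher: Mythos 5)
Your proposal is correct and follows essentially the same strategy as the paper: independence of degrees via disjoint edge supports for (1)--(2), min-entropy from the "hidden" edge indicators to the $p$ deleted vertices on each side for (3), and mod-$p$ equidistribution of binomial degrees for (4). If anything you are more explicit than the paper in two places --- the Bayes computation showing the two degree constraints cannot pin down an $A_{1}$-entry, and the character-sum proof of equidistribution, which the paper only asserts --- and your count of $p$ (rather than $2p$) free entries per row is the accurate one.
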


\begin{proof}

We first show $(1)$. Note that the value of the diagonal entry $(D_{1,1})_{ii}$ depends only on the $i$\textsuperscript{th} row of $A_{0}$. Hence the $(D_{1,1})_{ii}$ are independent of each other and of any entry outside of the $i$\textsuperscript{th} row of $A_{0}$, which in particular includes the entries of $\Delta_{1}$ outside the $i$\textsuperscript{th} row. The proof of $(2)$ is similar.

We will now prove $(3)$. 

Let $x$ be an entry in the upper triangle of $\Delta_{1}$. If $x\in D_{1,1}$, then as $x$ is non-constant, it must be on the diagonal. As we saw above, $x$ depends only on the values in the $i$\textsuperscript{th} row of $A_{0}$. 

Fix the rest of the entries of the $i$\textsuperscript{th} row of $\Delta_{1}$. There are still $2p$ entries of the $i$\textsuperscript{th} row of $A_{0}$ not in $\Delta_{1}$, which are left undetermined. For any choice of the first $2p-1$ of these, the last entry can be either $-1$ with probability $q$ or $0$ otherwise, which would change the value of $x$. Hence $x$ has min-entropy at least $\min(q,1-q)$ with respect to the rest of the upper triangular entries. The case where $x\in D_{1,2}$ is similar.

Now, assume $x\in -A_{1}$. Fix all the other entries of $\Delta_{1}$. The only ones of which $x$ is not independent are those in the row and column of $x$. The row sum (in $A_{0}$) must be equal to the corresponding row entry, and the column sum must be equal to the corresponding column entry. 

There are $p$ unfixed entries in the row that are in $A_{0}$ but not in $\Delta_{1}$, and the sum of these entries can be equal to any value in $\Z/p\Z$ with probability at least $\min(q,1-q)^{p+1}$.The same goes for the column sum. In particular, the probability that both the row and the column sum allow $x$ to be zero is at least $\min(q,1-q)^{2(p+1)}$. Similarly, the probability that both allow $x=-1$ is at least $\min(q,1-q)^{2(p+1)}$. Hence $x$ has min-entropy at least $\min(q,1-q)^{2(p+1)}$ with respect to the rest of the upper triangular entries.

Finally, we prove $(4)$. Let $a\in\Z/p\Z$. To see that each entry of $D_{1,1}$ is equal to $a$ with probability $\frac{1}{p}+O(e^{-cn})$, note that it is equal to $a$ when the sum of the corresponding row in $A_{0}$ is equal to $a$. Since this row has $\alpha n$ independent entries equal to $1$ with probability $q$ and zero otherwise, its sum is uniformly distributed in $\Z/p\Z$ up to an $O(e^{-cn})$ error term, where $c$ is a constant depending only on $q,\alpha$ and $p$. 
\end{proof}

We will take $M$ to be equal to $\Delta_{1}(n+2p,\alpha,q,p)$, then adjust the probability space so that the diagonal values of $M$ are equidistributed in $Z/p\Z$. Since this changes only an exponentially small part of the probability space, $\corank(M)$ is usually within small distance of $\corank(\Delta_{1}(n+2p,\alpha,q,p))$. But 
\[\left|\corank(\Delta_{1}(n+2p,\alpha,q,p))-\corank(\Delta_{1}(n,\alpha,q,p))\right|<4p,\] 
so by transitivity $\corank(M)$ is usually within small distance of $\corank(\Delta_{1}(n,\alpha,q,p))$. Hence we have:

\begin{prop}
\label{prop:mtomain}
The $p$-rank of $\Gamma(G)$ is usually within small distance of $\corank(M)$.
\end{prop}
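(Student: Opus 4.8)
The plan is to insert one more ``usually within small distance'' comparison into the chain that the preceding discussion has already assembled. The remarks just above show that $\corank(M)$ is usually within small distance of $\corank(\Delta_{1}(n,\alpha,q,p))$, so by the transitivity in Lemma~\ref{lemma:closeness}(1) it suffices to prove that the $p$-rank of $\Gamma(G)$ is usually within small distance of $\corank(\Delta_{1}(n,\alpha,q,p))$. I would obtain this by first identifying the $p$-rank with a corank of $\Delta'/p$, and then bounding the difference of coranks deterministically.

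First I would identify the $p$-rank with a corank. When $G$ is connected, $\Gamma(G)=\coker(\Delta')$ for the reduced laplacian $\Delta'$, and since tensoring with $\F_{p}$ is right exact, the $p$-rank equals $\dim_{\F_{p}}\coker(\Delta'/p)=\corank(\Delta'/p)$. By the result of \cite{ra} quoted in Section~\ref{sec:background}, $G$ fails to be connected only with probability $O(e^{-Kn})$; on that event both the $p$-rank and $\corank(\Delta'/p)$ are still bounded by $O(n)$, since the sandpile group has rank at most the number of vertices. Hence Lemma~\ref{lemma:closeness}(3) shows that the $p$-rank is usually within small distance of $\corank(\Delta'/p)$.

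Next I would compare $\corank(\Delta'/p)$ with $\corank(\Delta_{1})$ by a purely deterministic bound. Both $\Delta'/p$ and $\Delta_{1}$ are obtained from the square matrix $\Delta/p$ by deleting rows together with their corresponding columns: one of each for $\Delta'$, and $p$ at each end, hence $2p$ of each, for $\Delta_{1}$. The elementary fact that deleting a single row or a single column of a matrix over a field changes its rank by at most $1$ shows that deleting $k$ rows and $k$ columns of an $N\times N$ matrix changes its corank by at most $k$. Applying this to the common parent $\Delta/p$ gives $|\corank(\Delta'/p)-\corank(\Delta/p)|\le 1$ and $|\corank(\Delta_{1})-\corank(\Delta/p)|\le 2p$, so $|\corank(\Delta'/p)-\corank(\Delta_{1})|\le 2p+1=O(1)$; by Lemma~\ref{lemma:closeness}(4) the two coranks are then usually within small distance. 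Chaining these relations together with the already-established one for $\corank(M)$, through Lemma~\ref{lemma:closeness}(1), completes the proof.

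None of the steps is a genuine obstacle, and the argument is essentially bookkeeping within the closeness formalism. The only two points that require care are restricting to the connected event before using the cokernel description of $\Gamma(G)$, which is precisely what Lemma~\ref{lemma:closeness}(3) is designed to absorb, and keeping track that $\Delta_{1}$ deletes exactly $2p$ rows and $2p$ columns of $\Delta/p$ (not $4p$ of each), so that the deterministic gap $2p+1$ is a true constant independent of $n$.
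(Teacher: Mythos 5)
Your proposal is correct and follows essentially the same route as the paper, which in fact leaves most of these steps implicit: the paper only records that $\corank(M)$ is usually within small distance of $\corank(\Delta_{1}(n,\alpha,q,p))$ and tacitly relies on exactly the two ingredients you supply, namely the identification of the $p$-rank with $\corank(\Delta'/p)$ on the (exponentially likely) connected event and the deterministic $O(1)$ bound on the change of corank when deleting a bounded number of rows and matching columns of $\Delta/p$. Your bookkeeping of the deleted rows and columns and your use of Lemma~\ref{lemma:closeness} parts (1), (3), and (4) are all accurate, so there is nothing to correct.
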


We will also assume that $\lfloor\alpha n\rfloor=\alpha n$, so that $M$ is a $(1+\alpha)n\times (1+\alpha)n$ matrix. We will write:

\[M=\begin{pmatrix}
D_{1} & A\\ 
A^{T} & D_{2}
\end{pmatrix}\]

\subsection{Calculating the corank of $M$}

In this section, we prove the following statement about $M$:
\begin{prop}
\label{prop:mrank}
Let $M=M(n,\alpha,q,p)=\begin{pmatrix}
D_{1} & A\\ 
A^{T} & D_{2}
\end{pmatrix}$ be the matrix described above. Then $\corank(M)$ is usually within small distance of $\max\left(B\left(n,\frac{1}{p}\right)-\alpha n,0\right)$.
\end{prop}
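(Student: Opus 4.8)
The plan is to combine a deterministic lower bound with a min-entropy upper bound, after conditioning on the diagonal block $D_1$. Let $Z=\{i:(D_1)_{ii}=0\}$ and $k=|Z|$. Because the diagonal entries of $D_1$ are (after the adjustment) independent and uniform on $\Z/p\Z$, the count $k$ is exactly distributed as $B(n,\frac1p)$, so it suffices to prove that, conditioned on $D_1$, $\corank(M)$ is usually within small distance of $\max(k-\alpha n,0)$ with constants uniform in $D_1$; the unconditional statement then follows from Lemma~\ref{lemma:closeness}. The lower bound is structural: since $D_1$ is diagonal, each row $i\in Z$ is supported entirely on the $\alpha n$ columns indexed by $R$, so these $k$ rows span a space of dimension at most $\alpha n$. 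Counting ranks row-block by row-block gives $\rank(M)\le \alpha n+(n-k+\alpha n)$, hence $\corank(M)\ge \max(k-\alpha n,0)$ deterministically.

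For the upper bound I condition on $D_1$ and Schur-complement out the invertible diagonal block $D_1'$ formed by the $n-k$ nonzero diagonal entries. Since Schur complements preserve corank, $\corank(M)=\corank(M')$ where
\[ M'=\begin{pmatrix} 0 & A_Z \\ A_Z^T & D_2' \end{pmatrix}, \qquad D_2'=D_2-A_{Z^c}^T (D_1')^{-1} A_{Z^c}, \]
$A_Z$ is the $k\times\alpha n$ submatrix of $A$ on the rows $Z$, and $D_2'$ is symmetric. A direct kernel computation for $M'$ gives the exact identity
\[ \corank(M')=(k-r)+\big(\dim\ker A_Z-\rank(P^T D_2' P)\big), \qquad r=\rank(A_Z), \]
where the columns of $P$ form a basis of $\ker A_Z$, and the second summand is the dimension of the radical of the symmetric form $D_2'$ restricted to $\ker A_Z$. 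So the task reduces to showing two quantities are usually within small distance of $0$: (i) $\corank(A_Z)=\min(k,\alpha n)-r$, and (ii) the radical $\dim\ker A_Z-\rank(P^T D_2' P)$.

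Claim (i) is the heart of the easy direction. The entries of $A_Z$ lie in the upper triangle of $M$, so by Lemma~\ref{lemma:properties}(3) they have min-entropy at least $\beta$ with respect to one another, a property preserved under conditioning on $D_1$; Corollary~\ref{cor:rectrank} then gives that $\corank(A_Z)$ is usually within small distance of $0$. When $\alpha<\frac1p$ this already finishes the job: there $k\ge\alpha n$ typically, $\ker A_Z$ has dimension $\alpha n-r=\corank(A_Z)$, which is small, so the radical term is squeezed between $0$ and $\corank(A_Z)$ and $\corank(M')$ sits within small distance of $k-\alpha n$ by the squeeze part of Lemma~\ref{lemma:closeness}.

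The main obstacle is claim (ii) in the regime $\alpha>\frac1p$, where $\ker A_Z$ has the large dimension $\alpha n-r\approx(\alpha-\frac1p)n$ and we must show the restricted form $P^T D_2' P$ is almost nondegenerate. This does not reduce to a plain rectangular-rank statement, because $D_2'$ is a dependent Schur complement. The idea is that, conditioned on $A_Z$ (hence on $P$ and on $\ker A_Z$), the diagonal $D_2$ still carries independent uniform randomness that interacts with $A$ only within single columns, so $P^T D_2' P=\sum_{j\in R}(D_2)_{jj}\,p_j p_j^T+(\text{term independent of }D_2)$ is a random symmetric matrix whose entries retain positive min-entropy; an argument in the spirit of Theorem~\ref{thm:rectrank}, adapted to this symmetric form, should then show its corank is usually within small distance of $0$. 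Combining (i) and (ii) through the transitivity, squeeze, and additivity properties of Lemma~\ref{lemma:closeness} yields that $\corank(M)$ is usually within small distance of $\max(k-\alpha n,0)$, and averaging over $D_1$ completes the proof.
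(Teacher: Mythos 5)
Your skeleton is close to the paper's: identify the zero set $Z$ of the diagonal of $D_1$ (the paper's $r=\corank(D_1)\sim B(n,\frac1p)$), Schur-complement out the invertible block $D_1'$, and use the min-entropy of the rows of $A$ indexed by $Z$ via Corollary~\ref{cor:rectrank}. Your deterministic lower bound is fine, and the exact kernel identity $\corank(M')=(k-r)+\bigl(\dim\ker A_Z-\rank(P^TD_2'P)\bigr)$ is correct and is a cleaner bookkeeping device than anything in the paper; together with claim (i) it does dispose of the case $k\ge\alpha n$ essentially as the paper does.

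The gap is claim (ii), and it is not a routine adaptation of Theorem~\ref{thm:rectrank}. You propose to control the radical of $D_2'$ on $\ker A_Z$ using only the independent uniform diagonal $D_2$, treating the Schur correction $-A_{Z^c}^T(D_1')^{-1}A_{Z^c}$ as a fixed symmetric term $E$. No such argument can succeed uniformly in $E$: already for $P=I$ the matrix $D_2+E$ with $E=0$ is a uniform diagonal matrix, whose corank is $B(\alpha n,\frac1p)\approx\frac{\alpha n}{p}$ --- linear in $n$, not usually within small distance of $0$. The greedy principal-minor argument (the paper's Claim in the case $r<\alpha n$) reflects exactly this: each diagonal step succeeds only with probability about $1-\frac1p$, so diagonal randomness alone yields a nonsingular principal submatrix of proportion $1-\frac1p$ and leaves a deficit of order $\frac{n}{p}$. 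Any proof of (ii) must therefore also exploit the off-diagonal randomness of $D_2'$, which enters quadratically through $A_{Z^c}$, and after compressing by $P$ those entries have no evident min-entropy with respect to one another (there are $\Theta(n^2)$ entries of $P^TD_2'P$ fed by only $\alpha n$ independent diagonal values plus dependent quadratic terms, so conditioning on all other entries can pin down the rest). This is precisely where the paper does its real work in the case $r<\alpha n$: after the greedy step it returns to the bordered matrix and runs a three-block row-independence argument that reuses the min-entropy of the entries of $A_Z$ itself to recover the missing $\frac1p$ proportion. Your reduction has already quotiented $A_Z$ out of the radical term, so that second source of randomness is no longer visible there, and the ``should then show'' at the end of your claim (ii) is carrying the hardest part of the proposition without a workable mechanism behind it.
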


\noindent Together with Proposition~\ref{prop:mtomain}, this implies Theorem~\ref{thm:strongMain}.

Throughout the proof, we will use $\height(A)$ and $\width(A)$ to denote the number of rows and columns of $A$ respectively. If $A$ is a square matrix, we use $\dim(A)$ for both of these.

\begin{proof}

Let $r$ be the number of zero entries on the diagonal of $D_{1}$, that is, $r=\corank(D_{1})$. Since the diagonal values of $D_{1}$ are independent and uniformly distributed, it is easy to see that $r=B\left(n,\frac{1}{p}\right)$. Hence, it suffices to show that $\corank(M)$ is usually within small distance of $\max\left(r-\alpha n,0\right)$.

Our proof will rely on finding nonsingular submatrices of $M$, and taking the Schur complement with respect to them. This will allow us to reduce the problem of finding $\corank(M)$ to finding the coranks of matrices which are either nonsingular (in the case where $r-\alpha n< 0$), or have a large block of zeros which makes finding the corank straightforwards (in the case where $r-\alpha n\geq 0$). 

Assume that the first $n-r$ entries of $D_{1}$ are the nonzero entries, so that $D_{1}$ is of the form 
$\begin{pmatrix}
D_{1}' & 0\\ 
0 & 0
\end{pmatrix}$
Where $D_{1}'$ is invertible. Hence we can write
\[M=\begin{pmatrix}
D_{1}' &0  &B_{1} \\ 
0 &0  &B_{2} \\ 
B_{1}^{T} &B_{2}^{T}  &D_{2} 
\end{pmatrix} \]
where $B_{1},B_{2}$ are random matrices of dimension $\alpha n\times (n-r)$ and $\alpha n\times r$ respectively. Taking the Schur Complement of $M$ with respect to $D_{1}'$, we get:
\[M/D_{1}'=\begin{pmatrix}
0 & B_{2}\\ 
B_{2}^{T} & D_{2}-B_{1}^{T}D_{1}'^{-1}B_{1}
\end{pmatrix}.
\]
We will now split into cases:

\vspace{12 pt}
\noindent{\bf The case $r \ge \alpha n.$}
\vspace{12 pt}

In this case, we want to show that $\corank(M)$ is usually within small distance of $r-\alpha n$. Now, 
\[\height(B_{2})=r\geq\alpha n=\width(B_{2}).\]
As $\rank(B_{2})=\rank(B_{2}^{T})$, it is easy to see that 
\[\rank(M/D_{1}')\geq \rank(B_{2})+\rank(B_{2}^{T})=2\rank(B_{2}),\] 
and thus 
\[\corank(M)=\corank(M/D_{1}')\leq \dim(M/D_{1}')-\rank(M/D_{1}')=(\alpha n+r)-(2\rank(B_{2})).\]

Conversely, The corank of $M/D_{1}'$ is at least the corank of the submatrix of the top $n-r$ rows, given by $\begin{pmatrix}
0 & B_{2}
\end{pmatrix}. $ The rank of this submatrix is equal to $\rank(B_{2})$, so the corank is $r-\rank(B_{2})$.

Since $B_{2}$ has min-entropy at least $\beta$ for some positive constant $\beta$, by Corollary~\ref{cor:rectrank}, $\rank(B_{2})$ is usually within small distance of 
\[\min(\height(B_{2}),\width(B_{2}))=\min(r,\alpha n)=\alpha n.\]

Applying this to our lower and upper bounds for $\corank(M)$, we get that the upper bound is usually within small distance of $(\alpha n+r)-(2\alpha n)=r-\alpha n$. Similarly, our lower bound is usually within small distance of $r-\alpha n$. Hence by Lemma~\ref{lemma:properties}, $\rank(M)$ is usually within small distance of $r-\alpha n$.

\vspace{12 pt}
\noindent{\bf The case $r <\alpha n.$}
\vspace{12 pt}

In this case, we need to show that $\corank(M)=\corank(M/D_{1}')$ is usually within small distance of zero.

Write $C=D_{2}-B_{1}^{T}D_{1}'^{-1}B_{1}$ for the bottom-right $\alpha n\times \alpha n$ submatrix of $M/D_{1}'$. We will use the following claim:

\begin{claim}
Let $s$ be the size of the largest set of indices $J\subseteq \{1,\dots,\alpha n\}$ with the property that $C_{J}=(C_{ij|i,j\in J})$ is nonsingular. Then for any constant $\epsilon>0$, $s\geq \left(\alpha(1 -\frac{1}{p})-\epsilon\right) n$ with probability $1-O(e^{-cn})$ for some constant $c>0$.
\end{claim}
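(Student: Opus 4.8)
The plan is to build a nonsingular principal submatrix of $C$ greedily, one index at a time, and to show that each extension succeeds with conditional probability exactly $1-\frac1p$, so that the number of indices collected dominates a binomial variable to which Hoeffding's inequality applies. Since the greedy set certifies a lower bound on $s$, this will give the claim.

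The key structural observation is that the diagonal and off-diagonal parts of $C=D_2-B_1^TD_1'^{-1}B_1$ decouple once we condition correctly. As $D_2$ is diagonal, for $i\ne j$ we have $C_{ij}=-(B_1^TD_1'^{-1}B_1)_{ij}$, a function of the adjacency block $A$ and of $D_1'$ alone, with no dependence on $D_2$; whereas $C_{ii}=(D_2)_{ii}-(B_1^TD_1'^{-1}B_1)_{ii}$. I would therefore condition on the pair $(A,D_1)$: this freezes every off-diagonal entry of $C$, while by the uniformity and independence properties of the $D_2$-diagonal recorded in Lemma~\ref{lemma:properties} (parts (2) and (4)), together with the diagonal adjustment used to define $M$, the entries $(D_2)_{ii}$ — and hence the diagonal entries $C_{ii}$ — are, conditionally on $(A,D_1)$, independent and uniformly distributed over $\Z/p\Z$.

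With this decoupling in hand I would process the indices $i=1,\dots,\alpha n$ in order, maintaining a set $J$ with $C_J$ nonsingular. Since $\det(C_{J\cup\{i\}})=\det(C_J)\cdot\sigma_i$ where $\sigma_i:=C_{ii}-C_{i,J}(C_J)^{-1}C_{J,i}$, the submatrix $C_{J\cup\{i\}}$ is nonsingular precisely when $\sigma_i\ne 0$; when this holds I add $i$ to $J$, otherwise I discard it. Let $\mathcal G_{i-1}$ be the $\sigma$-algebra generated by $(A,D_1)$ and $C_{11},\dots,C_{i-1,i-1}$. Then $J$ and the quantity $C_{i,J}(C_J)^{-1}C_{J,i}$ are $\mathcal G_{i-1}$-measurable — they involve only frozen off-diagonal entries and earlier diagonal entries — while $C_{ii}$ is uniform and independent of $\mathcal G_{i-1}$. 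Hence $\sigma_i$ is uniform given $\mathcal G_{i-1}$, so $\P(\sigma_i\ne 0\mid\mathcal G_{i-1})=1-\frac1p$.

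It follows that the number $N$ of indices added to $J$ stochastically dominates $B(\alpha n,1-\frac1p)$, and since $s\ge N$, Hoeffding's inequality (Theorem~\ref{thm:hoeffding}) yields, for any $\epsilon>0$,
\[
\P\!\left(s<\Big(\alpha\big(1-\tfrac1p\big)-\epsilon\Big)n\right)\le \P\!\left(B(\alpha n,1-\tfrac1p)<\big(1-\tfrac1p-\tfrac{\epsilon}{\alpha}\big)\alpha n\right)=O(e^{-cn}).
\]
The step I expect to be the main obstacle is the conditional-independence setup of the second paragraph: one must check carefully that taking the Schur complement with respect to $D_1'$ introduces no dependence of the off-diagonal entries of $C$ on the resampled $D_2$-diagonal, and that the diagonal adjustment in the construction of $M$ genuinely renders the $(D_2)_{ii}$ exactly uniform and mutually independent after conditioning on $(A,D_1)$ (the randomness coming from the adjacencies to the removed vertices, which lie outside $\Delta_1$). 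Everything afterward — the greedy extension and the appeal to Hoeffding — is routine.
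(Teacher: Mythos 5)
Your overall strategy is the same as the paper's: grow $J$ greedily, show each new index is accepted with conditional probability close to $1-\frac{1}{p}$ (your Schur-complement identity $\det(C_{J\cup\{i\}})=\det(C_J)\,\sigma_i$ is just a cleaner phrasing of the paper's ``unique coefficients'' argument), and finish with stochastic domination and Hoeffding. The gap is exactly where you suspected: the conditioning. You condition on the \emph{entire} adjacency block $A$ together with $D_1$, and then need $(D_2)_{ii}$ to be uniform and independent given that $\sigma$-algebra. But once all of $A$ is fixed, the only randomness left in $(D_2)_{ii}$ is the sum of the $2p$ adjacencies of right-vertex $i$ to the removed left-vertices, and a sum of $2p$ Bernoulli-$q$ variables reduced mod $p$ is not uniform (for $p=3$, $q=0.1$ it is overwhelmingly concentrated at $0$), nor even close to uniform. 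The ``adjustment'' that makes the diagonal of $M$ equidistributed cannot rescue this: it is an exponentially small total-variation perturbation of the joint law, which can only make the diagonal uniform \emph{marginally}; forcing the diagonal to be uniform \emph{conditionally on $A$} would require a macroscopic change to the joint distribution of $(A,D_2)$, since the conditional law of $(D_2)_{ii}$ given $A$ is genuinely far from uniform.

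The repair is to condition on less, which is what the paper does. Write $A$ in block form as $B_1$ (rows indexed by nonzero diagonal entries of $D_1$) and $B_2$ (rows indexed by the $r$ zero entries). The off-diagonal entries of $C=D_2-B_1^TD_1'^{-1}B_1$ depend only on $B_1$ and $D_1'$, so conditioning on $(B_1,D_1)$ and the earlier diagonal entries of $C$ already freezes everything you need frozen --- but it leaves the $i$-th column of $B_2$ free, and $(D_2)_{ii}$ contains the sum of that column. Since $r\geq\frac{n}{2p}$ with probability $1-O(e^{-cn})$, that column sum mod $p$ is exponentially close to uniform, so each greedy step succeeds with conditional probability at least $1-\frac{1}{p}-\delta$ with $\delta=O(e^{-cn})$ rather than exactly $1-\frac{1}{p}$. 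Your final Hoeffding step absorbs this $\delta$ without change. So the argument is salvageable, but as written the central conditional-independence claim is false rather than merely unverified.
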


\begin{proof}
To see this, we build up a set $J$ by going through the indices $i\in\{1,\dots,\alpha n\}$. For each $i$, we add $i$ to $J$ if $C_{J\cup\{i\}}$ is nonsingular. We will show that for each $i$, we add $i$ with probability at least $1-\frac{1}{p}-\delta$, where $\delta>0$ is arbitrarily small as $n$ grows. Since $J$ is the sum of $n$ Bernoulli random variables, each equal to $1$ with probability at least $1-\frac{1}{p}-\delta$ independently of the previous values, we can say that $|J|\geq B\left(\alpha n, 1-\frac{1}{p}-\delta\right)$. 

By Hoeffding's inequality, 
\[B\left(\alpha n, 1-\frac{1}{p}-\delta\right)>(1-\delta)\alpha\left(1-\frac{1}{p}-\delta\right)n> \alpha\left(1 -\frac{1}{p}-\epsilon\right) n\] 
with probability $1-O(e^{-cn})$ (the second inequality holds for all sufficiently small $\delta$).

To see that each $i$ can be added with probability at least $1-\frac{1}{p}-\delta$, note that the diagonal entries of $D_{2}$ are  the sums of entries in $B_{1}$ with entries of $B_{2}$, which are independent of them. There are $r=B\left(n,\frac{1}{p}\right)$ entries in each column of $B_{2}$, so by Hoeffding's inequality the number of entries in each column of $B_{2}$ is greater than $\frac{1}{2p}n$ with probability $1-O(e^{-cn})$. Hence we can assume that the entries of $D_{2}$ are exponentially close to being uniformly distributed in $\Z/p\Z$, given any condition on $B_{1},D_{1}'$, and the previous diagonal entries of $D_{2}$. This means that for any $x\in Z/p\Z$ and any conditions on the rest of the entries of $C$, $\P(C_{ii}=x)\leq \frac{1}{p}+\delta$, where $\delta$ can be exponentially small in $n$.

Let $J$ be the set of indices we obtain from taking the above process on $\{1,\dots,i-1\}$. We need to show that we add $i$ to $J$ with probability at least $1-\delta-\frac{1}{p}$. We add $i$ to $J$ unless $C_{J\cup\{i\}}$ becomes singular. But this happens only if the last column of $C_{J\cup\{i\}}$ is dependent on the first $|J|$ columns.

Write $C_{J\cup\{i\}}=\begin{pmatrix}
u_{1}\cdots u_{|J|} & u_{i}\\ 
C_{ij_{0}}\dots C_{ij_{|J|}} & C_{ii}
\end{pmatrix}$. Since $C_{J}$ is nonsingular, there exist unique coefficients $a_{j}\in \Z/p\Z$ such that $\sum a_{j}u_{j}=u_{i}$. But $C_{J\cup\{i\}}$ only if its columns are dependent, which happens only if $\sum a_{j}C_{ij}=C_{ii}$. From the above statement with $x=\sum a_{j}C_{ij}$, this happens with probability at most $1-\delta-\frac{1}{p}$. This completes the proof of the claim.
\end{proof}

Getting back to the proof, assume that $J$ is composed of the last $s$ indices of $C$. Then the claim implies that, with probability $1-O(e^{-cn})$, we can write

\[ M/D_{1}' = \begin{pmatrix}
0 &B_{3}  &B_{4} \\ 
B_{3}^{T} &C_{1}  &C_{2} \\ 
B_{4}^{T} &C_{2}^{T}  & C_{3} 
\end{pmatrix}, \]

Where $C_3$ is nonsingular and $\dim(C_3)=s\geq\left(\alpha(1 -\frac{1}{p})-\epsilon\right) n$.

Taking $\epsilon$ to be sufficiently small so that $\alpha\left( \frac{1}{p}+\epsilon\right)<\frac{1}{p}-\epsilon$, we can assume that with probability $1-O(e^{-cn})$, 
\[r>\left(\frac{1}{p}-\epsilon\right)n>\alpha \left( \frac{1}{p}+\epsilon\right) n>\alpha n - s.\]

Since $C_{1}$ is $(\alpha n-s)\times (\alpha n-s)$ and $\height(B_{3})=r$, we can assume that $\width(B_{3})=\dim(C_{1})=\alpha n-s<r=\height(B_{3})$.

Note that we can drop rows and columns from $C_3$ if necessary, thus increasing the width of $B_{3}$, up to a maximum of $\width(B_{3})+\width(B_{4})=\alpha n> r$. In particular, we can assume that $s=\alpha n-r$, so that $B_{3}$ is an $r\times r$ square matrix.

We now wish to shows that $\rank(M/D_{1}')$ is usually within small distance of zero. To do this, we will split the rows into three sets, and successively show that that most of the rows are independent: 

First, let $u_{1},\dots,u_{\alpha n-r}$ be the bottom $\alpha n-r$ rows (those with elements in $C_{3}$). Since they contain as subrows the rows of $C_{3}$ (which we know are independent), they are independent.

Secondly, let $v_{1},\dots,v_{r}$ be the top $r$ rows. By Corollary~\ref{cor:rectrank}, $\corank(B_{3})$ is usually within small distance of zero. In fact, we can make a stronger claim: We claim that the corank of the $\alpha n\times\alpha n$ matrix $\begin{pmatrix}
B_{3} & B_{4}\\ 
C_{2}^{T} & C_{3}
\end{pmatrix}$ is usually within small distance of zero.

Let $v_{i}'$ be the top $r$ rows of this matrix, and $u_{i}'$ be the bottom $\alpha n-r$ rows. As before, the $u_{i}'$ are independent since their tails are the rows of $C_{3}$.

Now assume that the first $k$ of the $v_{i}'$ are independent both of each other and of the $u_{i}'$ (that is, the set $\{u_{1}',\dots,u_{\alpha n-r}',v_{1}',\dots,v_{k}'\}$ is independent. We claim that the probability that $v_{k+1}$ is dependent on $\{u_{1}',\dots,u_{\alpha n-r}',v_{1}',\dots,v_{k}'\}$ is at most $(1-\beta)^{r-k}$.

To see this, first choose a set $J$ of $\alpha n-r+k$ indices so that $\{u_{1}',\dots,u_{\alpha n-r}',v_{1}',\dots,v_{k}'\}$ are still independent when restricted to the entries in $J$. If $v_{k+1}'$ is dependent on $\{u_{1}',\dots,u_{\alpha n-r}',v_{1}',\dots,v_{k}'\}$, then we can write $v_{k+1}'=\sum_{i\leq k} a_{i}v_{i}'+\sum b_{i}u_{i}'$, where the $a_{i}$ and the $b_{i}$ are determined by the entries in $J$. This leaves $n-r$ undetermined coefficients in $v_{k+1}$, all of which must be equal to the corresponding entry of $\sum_{i\leq k} a_{i}v_{i}'+\sum b_{i}u_{i}'$. 

But the entries of $v_{k+1}$ all have min-entropy at least $\beta$ with respect to the other vectors, so each of them is equal to the corresponding entry of  $\sum_{i\leq k} a_{i}v_{i}'+\sum b_{i}u_{i}'$ with conditional probability at most $1-\beta$, hence the probability that all $r-k$ of them satisfy this equality is at most $(1-\beta)^{r-k}$. From here, we can conclude that the corank of the matrix is usually within small distance of zero by following the same reasoning as the proof of Theorem~\ref{thm:rectrank}.

Finally, it remains to show that the middle $r$ rows of $M/D_{1}'$, labeled $w_{1},\dots,w_{r}$, cannot add much to the corank. That is, we need to find a set of independent rows of $M/D_{1}'$ whose size is usually within small distance of $r+\alpha n$. We will assume that the $u_{i}'$ and $v_{i}'$ are all independent (otherwise we only have to drop $k$ of them, where $k$ is usually within small distance of zero).

We proceed in a similar manner to before.  For the first $w_{1}$, we let $J$ be the set of the last $\alpha n$ indices. Since the $u_{i}'$ and $v_{i}'$ are all independent, there exists a unique set of indices $a_{i},b_{i}$ so that $w_{1}=\sum a_{i}u_{i}+\sum b_{i}v_{i}$ holds when restricted to the last $\alpha n$ indices. As the first $r$ entries of $w_{1}$ have min-entropy at least $\beta>0$ with respect to the rest of the matrix, they all match the corresponding entries of $\sum a_{i}u_{i}+\sum b_{i}v_{i}$ with probability at most $(1-\beta)^{r}$.

We proceed similarly, showing that for each $w_{k+1}$ such that the set $\{u_{1},\dots,u_{\alpha n-r},v_{1},\dots,v_{r},w_{1},\dots,w_{k}\}$ is independent, the probability that $w_{k+1}$ is dependent on is is at most $(1-\beta)^{r-k}$. As before, this shows that the number of independent $w_{i}$ is usually within small distance of $r$.

Putting this all together, we get a set of independent rows whose size is usually within small distance of the height of $M/D_{1}'$. The corank of $M/D_{1}'$ is at most the number of rows not in our set, which is usually within small distance of zero. This completes the proof.

\end{proof}

\section{Proofs of the Corollaries}
\label{sec:details}
In this section, we prove the corollaries of Theorem~\ref{thm:main}.

We begin by proving Corollary~\ref{cor:balanced}:

\begin{proof}[Proof of Corollary~\ref{cor:balanced}]
Let $\epsilon>0$, and let $X=X(n,p)$ be the $p$-rank of $G$. We need to show that as $n\rightarrow\infty$, $\E(X)<\epsilon n$.

Assume that $\epsilon<\frac{1}{2}$, and remove $\frac{\epsilon}{2} n$ vertices from the right side of the graph. By Theorem~\ref{thm:main}, the expected $p$-rank of the resulting graph is $O(1)$. Since removing a vertex changes the $p$-rank of the sandpile group by at most $1$, removing $\frac{\epsilon}{2} n$ vertices changes it by at most $\frac{\epsilon}{2} n$. Hence $X\leq \frac{\epsilon}{2}n+O(1)<\epsilon n$ for large $n$, which completes the proof.
\end{proof}

We now prove Corollary~\ref{cor:cyclic}. To do this, we show that the $2$-rank of $\Gamma(G)$ when $\alpha<\frac{1}{2}$ has low probability of being $\leq 1$, so the $2$-part of the group has low probability of being cyclic.

\begin{proof}[Proof of Corollary~\ref{cor:cyclic}]
Consider the $2-$rank of $\Gamma(G)$. As we saw in Theorem~\ref{thm:strongMain}, the $2$-rank of $\Gamma(G)$ is usually within small distance of $\max\left(B\left(n,\frac{1}{2}\right)-\alpha n,0\right)$. As $\alpha<\frac{1}{2}$, we have that by Hoeffding's inequality, 
\[B\left(n,\frac{1}{2}\right)>\left(\frac{1}{2}-\epsilon\right)n>\alpha n+\epsilon n\]
holds with probability $1-O(e^{-cn})$ for all $\epsilon>0$, where the second inequality will hold when $\epsilon<\frac{1}{2}\left(\frac{1}{2}-\alpha\right)$. Hence $\max\left(B\left(n,\frac{1}{2}\right)-\alpha n,0\right)>\epsilon n$ with probability $1-O(e^{-cn})$.

But the $2$-rank of $\Gamma(G)$ is usually within small distance of $\max\left(B\left(n,\frac{1}{2}\right)-\alpha n,0\right)$. Hence the $2$-rank of $\Gamma(G)$ is larger than $\frac{1}{2}\epsilon n$ with probability $1-O(e^{-cn})$ for some $c>0$, and in particular will be at least $2$ with probability $1-O(e^{-cn})$.

But if the $2$-rank of $\Gamma(G)$ is at least $2$, $\Gamma(G)$ cannot be cyclic. Hence the probability that $\Gamma(G)$ is cyclic is bounded by $O(e^{-cn})$ for some constant $c>0$.
\end{proof}

\bibliography{pPart}
\bibliographystyle{plain}

\end{document}